\newcommand{\be}{\begin{equation}}
\newcommand{\ee}{\end{equation}}
\newcommand{\beq}{\begin{eqnarray}}
\newcommand{\eeq}{\end{eqnarray}}
\newtheorem{prop}{Proposition}[section]
\newtheorem{remark}[prop]{Remark}
\def\begeq{\begin{equation}}
\def\endeq{\end{equation}}
\def\odot{\setbox0=\hbox{$\bigcirc$}\relax \mathbin {\hbox
to0pt{\raise.5pt\hbox to\wd0{\hfil $\wedge$\hfil}\hss}\box0 }}
\numberwithin{equation} {section}
\numberwithin{equation}{section}
\newtheorem{theorem}{\bf Theorem}[section]
\newtheorem{lemma}[theorem]{\bf Lemma}
\newtheorem{corollary}[theorem]{\bf Corollary}
\begin{document}

\title[inverse mean curvature flow for spacelike graphic hypersurfaces]
 {Inverse mean curvature flow for spacelike graphic hypersurfaces with boundary \\in Lorentz-Minkowski space $\mathbb{R}^{n+1}_{1}$}

\author{
 Ya Gao,~~ Jing Mao$^{\ast}$}

\address{
 Faculty of Mathematics and Statistics, Key Laboratory of
Applied Mathematics of Hubei Province, Hubei University, Wuhan
430062, China. }

\email{Echo-gaoya@outlook.com, jiner120@163.com}

\thanks{$\ast$ Corresponding author}

\date{}
%\maketitle
\begin{abstract}
In this paper, we consider the evolution of spacelike graphic
hypersurfaces defined over a convex piece of hyperbolic plane
$\mathscr{H}^{n}(1)$, of center at origin and radius $1$, in the
$(n+1)$-dimensional Lorentz-Minkowski space $\mathbb{R}^{n+1}_{1}$
along the inverse mean curvature flow with the vanishing Neumann
boundary condition, and prove that this flow exists for all the
time. Moreover, we can show that, after suitable rescaling, the
evolving spacelike graphic hypersurfaces converge smoothly to a
piece of hyperbolic plane of center at origin and prescribed radius,
which actually corresponds to a constant function defined over the
piece of $\mathscr{H}^{n}(1)$, as time tends to infinity.
\end{abstract}

\maketitle {\it \small{{\bf Keywords}: Inverse mean curvature flow,
spacelike hypersurfaces, Lorentz-Minkowski space, Neumann boundary
condition.}

{{\bf MSC 2020}: Primary 53E10, Secondary 35K10.}}

\section{Introduction}

The study of curvature flows is a very hot topic in Geometric
Analysis since the 80's of last century, and it attracts lots of
geometers' attention. There is an impressive event in the
development of the theory of curvature flows, that is, in 2002, G.
Perelman used the Hamilton-Ricci flow, an intrinsic curvature flow,
to successfully solve the geometrization conjecture, which directly
gives the solution to the famous $3$-dimensional Poincar\'{e}
conjecture.

There are two important extrinsic curvature flows -- the mean
curvature flow (MCF for short) and the inverse mean curvature flow
(IMCF for short). Given a prescribed ambient space, the so-called
MCF is actually deforming a one-parameter family of its submanifolds
along the unit inward normal vector with a speed equal to the mean
curvature $H$, while the IMCF is deforming a one-parameter family of
its submanifolds along the unit outward normal vector with a speed
equal to reciprocal of the mean curvature $1/H$. Hence, generally,
the MCF is a contracting flow while the IMCF is an expanding flow.

To our knowledge, the start of the study of MCF maybe is due to
Brakke \cite{kab} where he used the geometric measure theory to
investigate the motion of surface by its mean curvature, while
Huisken \cite{gh1} gave a pioneering contribution to this theory --
by mainly using the method of $L^{p}$ estimates, he proved that if
the initial hypersurface $\mathcal{W}^{n}$ is a compact strictly
convex hypersurface in the $(n+1)$-dimensional ($n\geq2$) Euclidean
space $\mathbb{R}^{n+1}$, the MCF has a unique smooth solution on
the finite time interval $[0,T_{\max})$ with $T_{\max}<\infty$, and
the evolving hypersurfaces $\mathcal{W}^{n}_{t}$ contract to a
single point as $t\rightarrow T_{\max}$. Moreover, after an
area-preserving rescaling, the rescaled hypersurfaces converge in
$C^{\infty}$-topology to a round sphere having the same area as
$\mathcal{W}^{n}$. Later, this classical result was successfully
generalized by himself to the situation that the ambient space
$\mathbb{R}^{n+1}$ was replaced by smooth complete Riemannian
manifolds whose injectivity radius, the curvature tensor and the
second fundamental form satisfy some pinching assumptions (see
\cite[Theorem 1.1]{gh2} for details). There are some interesting
curvature flows which are closely related to the classical MCF and
have been investigated already. For instance, the volume-preserving
MCF (see Huisken's work \cite{Huisken 1987}), the area-preserving
MCF (see McCoy's work \cite{James Mccoy 2003}), the mixed-volume
preserving MCF (see McCoy's work \cite{James Mccoy 2004}), the MCF
of entire graphs (see Ecker-Huisken's work \cite{eh}), the graphic
MCF of arbitrary codimension (see, e.g., Wang's work \cite{mtw}),
and so on. The corresponding author here, Prof. J. Mao, also has
some works on the MCF (see, e.g., \cite{chmx,lmw}). Specially, in
\cite{chmx}, for a given $3$-dimensional Lorentz manifold
$\mathcal{W}^{2}\times\mathbb{R}$ with the metric $
\sum_{i,j=1}^{2}\rho_{ij}dw^{i}\otimes dw^{j}-ds\otimes ds$, where
$\mathcal{W}^{2}$ is a $2$-dimensional complete Riemannian manifold
with the metric $\sum_{i,j=1}^{2}\rho_{ij}dw^{i}\otimes dw^{j}$ and
nonnegative Gaussian curvature, the authors therein investigated the
evolution of spacelike graphs (defined over compact, strictly convex
domains in $\mathcal{W}^{2}$) along the nonparametric MCF with
prescribed nonzero Neumann boundary condition (NBC for short), and
proved that this flow exists for all the time and its solutions
converge to ones moving only by translation. This interesting
conclusion somehow extends, for instance, the following results:
\begin{itemize}

\item (Huisken \cite{gh3}) Graphs defined over
bounded domains (with $C^{2,\gamma}$ boundary) in $\mathbb{R}^{n}$
($n\geq2$), which are evolving by the MCF with vertical contact
angle boundary condition (i.e., vanishing NBC), have been
investigated, and it was proven that this evolution exists for all
the time and the evolving graphs converge to a constant function as
time tends to infinity (i.e., $t\rightarrow\infty$);

\item (Altschuler-Wu \cite{aw})  Graphs, defined over strictly convex
compact domains in $\mathbb{R}^{2}$, evolved by the non-parametric
MCF with prescribed contact angle (not necessary to be vertical,
i.e., the NBC is not necessary to be zero), converge to translating
surfaces as $t\rightarrow\infty$.

\end{itemize}
Except \cite{aw,chmx,gh3}, there are many other interesting results
about the MCF with boundary conditions in both Riemannian and
pseudo-Riemannian manifolds -- see, e.g.,
\cite{add-x1,add-x2,add-x3,add-x4,add-x5} and reference therein. The
MCF theory also has some important applications. For instance, the
curve shortening flow (i.e., the lower dimensional case of the MCF)
can be used in the image processing (see, e.g., \cite{cf}); Topping
\cite{pt} used the MCF to successfully obtain some geometric
inequalities on surfaces with prescribed Gaussian curvature
assumptions.

As we know, Gerhardt \cite{Ge90} (or Urbas \cite{Ur}) firstly
considered the evolution of compact, star-shaped $C^{2,\gamma}$
hypersurfaces $\mathcal{W}_{0}^{n}$ in $\mathbb{R}^{n+1}$ along the
flow equation
\begin{eqnarray} \label{ICF-1}
\frac{\partial}{\partial t}X=\frac{1}{F}\nu,
\end{eqnarray}
where $F$ is a positive, symmetric, monotone, homogeneous of degree
one, concave function w.r.t. principal curvatures of the evolving
hypersurfaces
$\mathcal{W}^{n}_{t}=X(\mathbb{S}^{n},t)=X_{t}(\mathbb{S}^{n})$,
\footnote{In this paper, $\mathbb{S}^{n}$ stands for the unit
Euclidean $n$-sphere.}and $\nu$ is the outward unit normal vector of
$\mathcal{W}^{n}_{t}$. They separately proved that the flow exists
for all the time, and, after suitable rescaling, converge
exponentially fast to a uniquely determined sphere of prescribed
radius. This flow is called inverse curvature flow (ICF for short),
and clearly, $F=H$ is allowed and in this setting, the flow equation
becomes $\partial X/\partial t=\nu/H$, which is exactly the
classical IMCF equation.

The reason why geometers are interested in the study of the theory
of ICFs is that it has important applications in Physics and
Mathematics. For instance, by defining a notion of weak solutions to
IMCF, Huisken and Ilmanen \cite{hi1,hi2} proved the Riemannian
Penrose inequality by using the IMCF approach, which makes an
important step to possibly and completely solve the famous Penrose
conjecture in the General Relativity.  Also using the method of
IMCF, Brendle, Hung and Wang \cite{bhw} proved a sharp Minkowski
inequality for mean convex and star-shaped hypersurfaces in the
$n$-dimensional ($n\geq3$) anti-de Sitter-Schwarzschild manifold,
which generalized the related conclusions in the Euclidean
$n$-space. Besides, applying ICFs, Alexandrov-Fenchel type and other
types inequalities in space forms and even in some warped products
can be obtained - see, e.g., \cite{Gw1,Gw3,Li14,Li16,Mak}.

J. Mao also has some works on ICFs (see, e.g.,
\cite{Ch16,cmxx,cmtw,chmw,mt}), and we would like to give a brief
introduction to some of them such that readers can realize our
motivation of writing this paper clearly. In fact, Mao and his
collaborators have obtained:
\begin{itemize}
\item (\cite{Ch16}) The conclusion about the long-time existence and the convergence of IMCF in the anti-de Sitter-Schwarzschild $n$-manifold ($n\geq3$) obtained
 in \cite{bhw} was
successfully improved to a more general ICF (\ref{ICF-1}).

\item (\cite{cmtw}) The following ICF
\begin{equation}  \label{ICF-origin}
\left\{
\begin{aligned}
&\frac{\partial }{\partial t}X=\frac{1}{|X|^{\alpha}H(X)}\nu\\
&X(\cdot,0)=\mathcal{W}_{0}
\end{aligned}
\right.
\end{equation}
has been considered, where $\alpha\geq0$, $\mathcal{W}_{0}$ is a
closed, star-shaped and strictly mean convex
$C^{2,\gamma}$-hypersurface ($0<\gamma<1$) in $\mathbb{R}^{n+1}$,
$X(\cdot,t):\mathbb{S}^{n}\rightarrow\mathbb{R}^{n+1}$ is a
one-parameter family of hypersurfaces immersed into
$\mathbb{R}^{n+1}$ with $\mathcal{W}_{t}=X(\mathbb{S}^{n},t)$, $|X|$
is the distance from the point $X(x, t)$ to the origin of
$\mathbb{R}^{n+1}$, and, as before, $\nu$ is the unit outward normal
vector of $\mathcal{W}_t$. It has been proven that the flow
(\ref{ICF-origin}) exists for all the time and, after rescaling, the
evolving hypersurfaces converge smoothly to a round sphere. Clearly,
if $\alpha=0$, the ICF (\ref{ICF-origin}) degenerates into the
classical IMCF in $\mathbb{R}^{n+1}$ directly. This interesting
result tells us that if the homogeneous anisotropic factor
$|X|^{-\alpha}$ was imposed to the classical IMCF for compact,
star-shaped and strictly mean convex hypersurfaces in
$\mathbb{R}^{n+1}$, there is almost no affect on conclusions about
the long-time existence and the convergence of the flow equation.

\item (\cite{mt}) Given a smooth convex cone in $\mathbb{R}^{n+1}$
($n\geq2$),  the authors therein considered strictly mean convex
hypersurfaces with boundary which are star-shaped with respect to
the center of the cone and which meet the cone perpendicularly. If
those hypersurfaces inside the cone evolve along the ICF $\partial X
/\partial t=\nu/|X|^{\alpha}H(X)$, $\alpha\geq0$, then, by using the
convexity of the cone in the derivation of the gradient and
H\"{o}lder estimates, they can prove that this evolution exists for
all the time and the evolving hypersurfaces converge smoothly to a
piece of a round sphere as time tends to infinity. Clearly, this
interesting conclusion covers Marquardt's result \cite[Theorem
1]{Mar} (corresponding to $\alpha=0$) as a special case.
\end{itemize}

Our successful experience on studying MCF of spacelike surfaces in
Lorentz $3$-manifolds (see \cite{chmx}) encourages us to consider
the following problem:
\begin{itemize}
\item \emph{Whether one can consider ICFs in Lorentz manifolds or
not?}
\end{itemize}
We are able to give a positive answer to the above problem in this
paper. In order to state our main conclusion clearly, we need to
give several notions first.

Throughout this paper, let $\mathbb{R}^{n+1}_{1}$ be the
$(n+1)$-dimensional ($n\geq2$) Lorentz-Minkowski space with the
following Lorentzian metric
\begin{eqnarray*}
\langle\cdot,\cdot\rangle_{L}=dx_{1}^{2}+dx_{2}^{2}+\cdots+dx_{n}^{2}-dx_{n+1}^{2}.
\end{eqnarray*}
In fact, $\mathbb{R}^{n+1}_{1}$ is an $(n+1)$-dimensional Lorentz
manifold with index $1$. Denote by
\begin{eqnarray*}
\mathscr{H}^{n}(1)=\{(x_{1},x_{2},\cdots,x_{n+1})\in\mathbb{R}^{n+1}_{1}|x_{1}^{2}+x_{2}^{2}+\cdots+x_{n}^{2}-x_{n+1}^{2}=-1~\mathrm{and}~x_{n+1}>0\},
\end{eqnarray*}
which is exactly the hyperbolic plane\footnote{The reason why we
call $\mathscr{H}^{n}(1)$ a hyperbolic plane is that it is a
simply-connected Riemannian $n$-manifold with constant negative
curvature and is geodesically complete.} of center $(0,0,\ldots,0)$
(i.e., the origin of $\mathbb{R}^{n+1}$) and radius $1$ in
$\mathbb{R}^{n+1}_{1}$. Clearly, from the Euclidean viewpoint,
$\mathscr{H}^{2}(1)$ is one component of a hyperboloid of two
sheets.

In this paper, we consider the evolution of spacelike graphs
(contained in a prescribed convex domain) along the IMCF with zero
NBC, and can prove the following main conclusion.

\begin{theorem}\label{main1.1}
Let $M^n\subset\mathscr{H}^{n}(1)$ be some convex piece of the
hyperbolic plane $\mathscr{H}^{n}(1)\subset\mathbb{R}^{n+1}_{1}$,
and $\Sigma^n:=\{rx\in \mathbb{R}^{n+1}_{1}| r>0, x\in
\partial M^n\}$. Let
$X_{0}:M^{n}\rightarrow\mathbb{R}^{n+1}_{1}$ such that
$M_{0}^{n}:=X_{0}(M^{n})$ is a compact, strictly mean convex
spacelike $C^{2,\gamma}$-hypersurface ($0<\gamma<1$) which can be
written as a graph over $M^n$.
 Assume that
 \begin{eqnarray*}
M_{0}^{n}=\mathrm{graph}_{M^n}u_{0}
 \end{eqnarray*}
 is a graph over $M^n$ for a positive
map $u_0: M^n\rightarrow \mathbb{R}$ and
 \begin{eqnarray*}
\partial M_{0}^{n}\subset \Sigma^n, \qquad
\langle\mu\circ X_{0}, \nu_0\circ X_{0} \rangle_{L}|_{\partial
M^n}=0,
 \end{eqnarray*}
 where $\nu_0$ is the past-directed timelike unit normal vector of $M_{0}^{n}$, $\mu$ is a spacelike vector field
defined along $\Sigma^{n}\cap \partial M^{n}=\partial M^{n}$
satisfying the following property:
\begin{itemize}
\item For any $x\in\partial M^{n}$, $\mu(x)\in T_{x}M^{n}$, $\mu(x)\notin T_{x}\partial
M^{n}$, and moreover\footnote{As usual, $T_{x}M^{n}$, $T_{x}\partial
M^{n}$ denote the tangent spaces (at $x$) of $M^{n}$ and $\partial
M^{n}$, respectively. In fact, by the definition of $\Sigma^{n}$
(i.e., a time cone), it is easy to have $\Sigma^{n}\cap\partial
M^{n}=\partial M^{n}$, and we insist on writing as
$\Sigma^{n}\cap\partial M^{n}$ here is just to emphasize the
relation between $\Sigma^{n}$ and $\mu$. Since $\mu$ is a vector
field defined along $\partial M^{n}$, which satisfies $\mu(x)\in
T_{x}M^{n}$, $\mu(x)\notin T_{x}\partial M^{n}$ for any
$x\in\partial M^{n}$, together with the construction of
$\Sigma^{n}$, it is feasible to require $\mu(x)=\mu(rx)$. The
requirement $\mu(x)=\mu(rx)$ makes the assumptions $\langle\mu\circ
X_{0}, \nu_0\circ X_{0} \rangle_{L}|_{\partial M^n}=0$, $\langle \mu
\circ X, \nu\circ X\rangle_{L}=0$ on $\partial M^n \times(0,\infty)$
are reasonable, which can be seen from Lemma \ref{lemma2-1} below in
details. Besides, since $\nu$ is timelike, the vanishing Lorentzian
inner product assumptions on $\mu,\nu$ implies that $\mu$ is
spacelike.}, $\mu(x)=\mu(rx)$.
\end{itemize}
Then we have:

(i) There exists a family of strictly mean convex spacelike
hypersurfaces $M_{t}^{n}$ given by the unique embedding
\begin{eqnarray*}
X\in C^{2+\gamma,1+\frac{\gamma}{2}} (M^n\times [0,\infty),
\mathbb{R}^{n+1}_{1}) \cap C^{\infty} (M^n\times (0,\infty),
\mathbb{R}^{n+1}_{1})
\end{eqnarray*}
with $X(\partial M^n, t) \subset \Sigma^n$ for $t\geq 0$, satisfying the following system
\begin{equation}\label{Eq}
\left\{
\begin{aligned}
&\frac{\partial }{\partial t}X=\frac{1}{H}\nu ~~&&in~
M^n \times(0,\infty)\\
&\langle \mu \circ X, \nu\circ X\rangle_{L}=0~~&&on~ \partial M^n \times(0,\infty)\\
&X(\cdot,0)=M_{0}^{n}  ~~&& in~M^n
\end{aligned}
\right.
\end{equation}
where $H$ is the mean curvature of
$M_{t}^{n}:=X(M^n,t)=X_{t}(M^{n})$, and $\nu$ is the past-directed
timelike unit normal vector\footnote{On \cite[page 299]{chmx}, the
authors therein gave the expression of ``downward" unit normal vector
of spacelike graphs $\mathcal{G}$. Here, we would like to clarify
that the more precise terminology therein should be
``\emph{past-directed unit normal vector}", which, of course, is a
timelike vector.} of $M_{t}^{n}$. Moreover, the H\"{o}lder norm on
the parabolic space $M^n\times(0,\infty)$ is defined in the usual
way (see, e.g., \cite[Note 2.5.4]{Ge3}).

(ii) The leaves $M_{t}^{n}$ are spacelike graphs over $M^n$, i.e.,
 \begin{eqnarray*}
M_{t}^{n}=\mathrm{graph}_{M^n}u(\cdot, t).
\end{eqnarray*}

(iii) Moreover, the evolving spacelike hypersurfaces converge
smoothly after rescaling to a piece of
$\mathscr{H}^{n}(r_{\infty})$, where $r_{\infty}$ satisfies
\begin{eqnarray*}
\frac{1}{\sup\limits_{M^{n}}u_{0}}\left(\frac{\mathcal{H}^n(M_{0}^{n})}{\mathcal{H}^n(M^{n})}\right)^{\frac{1}{n}}\leq
r_{\infty}
\leq\frac{1}{\inf\limits_{M^{n}}u_{0}}\left(\frac{\mathcal{H}^n(M_{0}^{n})}{\mathcal{H}^n(M^{n})}\right)^{\frac{1}{n}},
\end{eqnarray*}
where $\mathcal{H}^n(\cdot)$ stands for the $n$-dimensional
Hausdorff measure of a prescribed Riemannian $n$-manifold,
$\mathscr{H}^{n}(r_{\infty}):=\left\{r_{\infty}x\in\mathbb{R}^{n+1}_{1}|
x\in\mathscr{H}^{n}(1)\right\}$.
\end{theorem}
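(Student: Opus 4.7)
The plan is to reformulate the flow as a scalar quasilinear parabolic problem for the radial height $u(\cdot,t):M^{n}\to\mathbb{R}$, with $M_{t}^{n}=\{u(x,t)\,x:x\in M^{n}\}$, and then to run the classical programme: short-time existence by oblique-parabolic theory, a priori estimates on $u$, $|\nabla u|$ and the mean curvature $H$, long-time existence by continuation, and convergence after rescaling. In the radial parameterisation $X=u\,x$ with $x\in M^{n}\subset\mathscr{H}^{n}(1)$, the induced metric is $g_{ij}=u^{2}\sigma_{ij}-u_{i}u_{j}$, where $\sigma$ denotes the hyperbolic metric on $\mathscr{H}^{n}(1)$; the spacelike condition thus reads $|\nabla^{\sigma}u|_{\sigma}^{2}<u^{2}$, and the past-directed timelike unit normal $\nu$, the second fundamental form and $H$ are expressible explicitly in $u,\nabla u,\nabla^{2}u$. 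Substituting into the first equation of (\ref{Eq}) turns the IMCF into a quasilinear parabolic PDE $u_{t}=F(u,\nabla u,\nabla^{2}u)$ on $M^{n}$. Because $\Sigma^{n}$ is the radial cone over $\partial M^{n}$ and $\mu$ is radially invariant, the Neumann condition $\langle\mu\circ X,\nu\circ X\rangle_{L}=0$ is equivalent to the oblique condition $\nabla_{\mu}u=0$ on $\partial M^{n}$ (this is the content of the forthcoming Lemma \ref{lemma2-1}), and short-time existence of a $C^{2+\gamma,1+\gamma/2}$ solution follows from standard oblique quasilinear parabolic theory.

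The core of the argument is the derivation of a priori estimates. First, since the radial profiles $r(t)=r_{0}e^{t/n}$ give rise to $M_{t}^{n}\subset\mathscr{H}^{n}(r(t))$, which are explicit solutions with zero Neumann data, the comparison principle yields the $C^{0}$ bound $(\inf_{M^{n}}u_{0})\,e^{t/n}\le u(\cdot,t)\le(\sup_{M^{n}}u_{0})\,e^{t/n}$. Next, I would establish a gradient estimate $|\nabla^{\sigma}u|_{\sigma}\le\theta\,u$ with $\theta<1$, preserving the spacelike condition, by applying the maximum principle to an auxiliary quantity such as $|\nabla^{\sigma}\log u|_{\sigma}^{2}$ or the tilt $v=-\langle X,\nu\rangle_{L}/u$; the boundary contribution at $\partial M^{n}$ is absorbed using the Neumann condition together with the convexity of $M^{n}$ and the radial-cone structure of $\Sigma^{n}$, which together force the correct sign in the Hopf boundary term. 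A similar Hopf-type argument applied to $\log H$, combined with the evolution equation for $1/H$, then yields uniform two-sided bounds on the mean curvature. With $u$, $|\nabla^{\sigma}u|$ and $H$ controlled, the PDE is uniformly parabolic with uniformly bounded coefficients; Krylov–Safonov/Nirenberg estimates for oblique Neumann problems and Schauder bootstrapping give uniform $C^{\infty}$ estimates up to the boundary, producing a global smooth solution and proving assertions (i) and (ii).

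For (iii), rescale by $\tilde{X}=e^{-t/n}X$, equivalently $\tilde{u}=e^{-t/n}u$. The $C^{0}$ bound becomes $\inf_{M^{n}}u_{0}\le\tilde{u}\le\sup_{M^{n}}u_{0}$, and the higher-order uniform estimates are inherited from the original flow. A standard oscillation-decay argument—applying the maximum principle to the rescaled equation, or equivalently exploiting the exponential decay of $H-n$—yields $\tilde{u}\to r_{\infty}$ in $C^{\infty}$. The value of $r_{\infty}$ is pinned down by the IMCF area identity $\tfrac{d}{dt}\mathcal{H}^{n}(M_{t}^{n})=\mathcal{H}^{n}(M_{t}^{n})$ (which follows from $\partial_{t}g_{ij}=2h_{ij}/H$): since the limit is a piece of $\mathscr{H}^{n}(r_{\infty})$ with $n$-area $r_{\infty}^{n}\mathcal{H}^{n}(M^{n})$, one obtains $r_{\infty}^{n}\mathcal{H}^{n}(M^{n})=\mathcal{H}^{n}(M_{0}^{n})$, and the stated two-sided bounds follow from the elementary estimates $(\inf_{M^{n}}u_{0})^{n}\mathcal{H}^{n}(M^{n})\le\mathcal{H}^{n}(M_{0}^{n})\le(\sup_{M^{n}}u_{0})^{n}\mathcal{H}^{n}(M^{n})$ for a spacelike radial graph. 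The main obstacle throughout is the gradient estimate (and in turn the mean-curvature estimate) at the boundary: in the Lorentzian setting, with indefinite inner product and $X$ itself timelike, one must verify that the Neumann condition combined with the radial structure of $\Sigma^{n}$ absorbs the boundary term with the correct sign, so that the spacelike condition $|\nabla^{\sigma}u|<u$ is never saturated along the flow—this is the Lorentzian counterpart of the convex-cone argument of \cite{mt}.
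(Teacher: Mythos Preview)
Your outline matches the paper's strategy almost step for step: reduce to a scalar equation for $\varphi=\log u$ with oblique Neumann data $D_{\mu}\varphi=0$, obtain $C^{0}$, time-derivative and gradient estimates by the maximum principle (using convexity of $M^{n}$ for the boundary sign), upgrade to H\"older and higher estimates, and analyse the rescaled flow. There is, however, a systematic sign reversal running through your proposal. With $\nu$ \emph{past}-directed and $h_{ij}=\langle X_{,ij},\nu\rangle_{L}$, the hyperboloid $\mathscr{H}^{n}(r)$ has $\nu=-\partial_{r}$ and $H=n/r>0$, so the radial solutions of (\ref{Eq}) obey $r'=-r/n$, i.e.\ $r(t)=r_{0}e^{-t/n}$: the flow \emph{contracts} toward the vertex of the light cone (this is worked out explicitly in Section~\ref{se7}). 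Correspondingly $\partial_{t}g_{ij}=-2h_{ij}/H$ (Lemma~\ref{EVQ}) and $\tfrac{d}{dt}\mathcal{H}^{n}(M^{n}_{t})=-\mathcal{H}^{n}(M^{n}_{t})$, not $+\mathcal{H}^{n}(M^{n}_{t})$; the correct rescaling is $\widetilde{X}=X\,\Theta^{-1}$ with $\Theta(t,c)=e^{-t/n+c}$. Your $C^{0}$ barriers, your area identity and your rescaling $\widetilde X=e^{-t/n}X$ are all written with the Euclidean sign; as stated, the barriers $(\inf u_{0})e^{t/n}\le u$ are violated immediately and $\widetilde u\to 0$, so the argument collapses before convergence can be discussed.

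Once the sign is fixed your sketch is essentially the paper's proof, with two technical differences worth flagging. First, the paper does not bound $H$ via an evolution equation for $\log H$; it bounds $\dot\varphi$ directly by the maximum principle applied to the time-differentiated scalar equation (Lemma~\ref{lemma3.2}) and combines this with the gradient estimate to control $uH$, hence $\widetilde H$. Second, the H\"older step is not Krylov--Safonov on the non-divergence form: the paper derives a divergence-form parabolic equation for $\Psi:=\Phi/w=1/(Hw)$ with $w=\langle X,\nu\rangle_{L}$, namely $\partial_{t}\Psi=\operatorname{div}_{g}(H^{-2}\nabla\Psi)-2H^{-2}\Psi^{-1}|\nabla\Psi|^{2}$, and runs a De~Giorgi energy argument to get $[\Psi]_{\beta}$ (Lemma~\ref{lemma4-3}); together with the frozen-time elliptic equation (\ref{key1}) for $D\varphi$, this feeds the linear Schauder machinery. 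Your Krylov--Safonov route would still need a separate mechanism to reach $C^{1,\alpha}$ in space before bootstrapping. Finally, your exact area identity $r_{\infty}^{n}\mathcal{H}^{n}(M^{n})=\mathcal{H}^{n}(M^{n}_{0})$ corresponds to the particular choice $c=0$ in $\Theta$; the two-sided bound in (iii) is the weaker statement that accommodates any admissible $c$, and your ``elementary estimates'' for $\mathcal{H}^{n}(M^{n}_{0})$ do not quite recover it (the Lorentzian volume factor $v=\sqrt{1-|D\varphi|^{2}}\le 1$ flips the direction of one of the inequalities you wrote).
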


\begin{remark}
\rm{ (1) In fact, $M^{n}$ is some \emph{convex} piece of the
spacelike hypersurface $\mathscr{H}^{n}(1)$ implies that the second
fundamental form of $\partial M^{n}$ is positive definite w.r.t. the
vector field $\mu$ (provided its direction is suitably chosen).
  \\
(2) For Theorem \ref{main1.1}, if the flow equation was replaced by
\begin{eqnarray} \label{1-4}
\frac{\partial }{\partial t}X=\frac{1}{|\langle
X,X\rangle_{L}|^{\alpha} H}\nu, \qquad (\alpha\leq0)
\end{eqnarray}
in $\mathbb{R}^{n+1}_{1}$ (or the flow equation, the ambient space
$\mathbb{R}^{n+1}_{1}$  were replaced by (\ref{1-4}) and the
$(n+1)$-dimensional Lorentz manifold
$\mathcal{W}^{n}\times\mathbb{R}$ respectively, where
$\mathcal{W}^{n}$ is a complete Riemannian $n$-manifold with
suitable Ricci curvature constraint), interesting conclusions can be
expected (see \cite{gm1,gm2} for details).\\
 (3) In the previous version of this paper (see arXiv:2104.10600v4),
 one might find that we have used a slightly different definition (for the second fundamental form) with the one given in our previous works \cite{chmx,GaoY} (i.e., having an
 exactly opposite sign with the formula (\ref{add-xxx}) below for calculating components of the second fundamental
 form). This difference leads to the situation that the mean
 curvature $H$ of spacelike graphic hypersurfaces computed therein
 has an opposite sign with the one used L\'{o}pez's setting in
 \cite{rl} (also would be different from the one used the setting in
 \cite{chmx,GaoY}). However, as explained clearly in Remark 1.1 of the previous
 version, if different orientations for the timelike unit normal
 vector $\nu$ have been used, then
 there would have no essential difference to deal with the
 system (\ref{Eq}), i.e., the IMCF with zero NBC in
 $\mathbb{R}^{n+1}_{1}$, in different settings for computing components of
 the second fundamental form. After we posted the previous version
 and some related works
 on arXiv, some (domestic and foreign) colleagues suggested that to
 avoid any potential confusion, it might be better to use a unified setting.
 Based on this reason, in this latest version, we decide to use the formula
 (\ref{add-xxx}) below -- equivalently, the setting for calculating components of the second fundamental
 form given in
 \cite{chmx,GaoY}.
 \\
 (4) As mentioned before,  in $\mathbb{R}^{n+1}$ ($n\geq2$), Marquardt \cite[Theorem
1]{Mar} firstly considered the evolution of strictly mean convex,
star-shaped hypersurface (w.r.t. the center of an open smooth convex
cone), which is contained in the cone and which meets the cone
perpendicularly, along the IMCF, and obtained the long-time
existence and the asymptotical behavior of the flow. Clearly,
Theorem \ref{main1.1} can be seen as the Minkowski-type version of
Marquardt's result \cite[Theorem 1]{Mar}. In the sequel, some
analytical techniques in \cite{Mar} (especially the part of
H\"{o}lder estimates) have been used for reference, but they should
be modification of the corresponding classical theory of
second-order nonlinear parabolic PDEs. Besides, one can check
interesting examples in Section \ref{se7} and will see the
\textbf{\emph{huge}} difference between Marquardt's result
\cite[Theorem 1]{Mar} and its Minkowski-type version here.
 }
\end{remark}

This paper is organized as follows. In Section \ref{se2}, several
fundamental structure equations of spacelike hypersurfaces in
$\mathbb{R}^{n+1}_{1}$ will be introduced. In Section \ref{se3}, we
will show that using the spacelike graphic assumption, the flow
equation (which generally is a system of PDEs) changes into a single
scalar second-order parabolic PDE. In Section \ref{se4}, several
estimates, including $C^0$, time-derivative and gradient estimates,
of solutions to the flow equation will be shown in details.
Estimates of higher-order derivatives of solutions to the flow
equation, which naturally leads to the long-time existence of the
flow, will be investigated in Section \ref{se5}. We will clearly
show the convergence of the rescaled flow in Section \ref{se6}. In
the end, two interesting and special examples will be shown in
Section \ref{se7}.

\section{The geometry of spacelike hypersurfaces in
$\mathbb{R}^{n+1}_{1}$} \label{se2}

As shown in \cite[Section 2]{GaoY}, we know the following fact:

\textbf{FACT}. Given an $(n+1)$-dimensional Lorentz manifold
$(\overline{M}^{n+1},\overline{g})$, with the metric $\overline{g}$,
and its spacelike hypersurface $M^{n}$. For any $p\in M^{n}$, one
can choose a local  Lorentzian orthonormal frame field
$\{e_{0},e_{1},e_{2},\ldots,e_{n}\}$ around $p$ such that,
restricted to $M^{n}$, $e_{1},e_{2},\ldots,e_{n}$ form orthonormal
frames tangent to $M^{n}$. Taking the dual coframe fields
$\{w_{0},w_{1},w_{2},\ldots,w_{n}\}$ such that the Lorentzian metric
$\overline{g}$ can be written as
$\overline{g}=-w_{0}^{2}+\sum_{i=1}^{n}w_{i}^{2}$. Making the
convention on the range of indices
\begin{eqnarray*}
0\leq I,J,K,\ldots\leq n; \qquad\qquad 1\leq i,j,k\ldots\leq n,
\end{eqnarray*}
and doing differentials to forms $w_{I}$, one can easily get the
following structure equations
\begin{eqnarray}
&&(\mathrm{Gauss~ equation})\qquad \qquad R_{ijkl}=\overline{R}_{ijkl}-(h_{ik}h_{jl}-h_{il}h_{jk}), \label{Gauss}\\
&&(\mathrm{Codazzi~ equation})\qquad \qquad h_{ij,k}-h_{ik,j}=\overline{R}_{0ijk},  \label{Codazzi}\\
&&(\mathrm{Ricci~ identity})\qquad \qquad
h_{ij,kl}-h_{ij,lk}=\sum\limits_{m=1}^{n}h_{mj}R_{mikl}+\sum\limits_{m=1}^{n}h_{im}R_{mjkl},
\label{Ricci}
\end{eqnarray}
and the Laplacian of the second fundamental form $h_{ij}$ of $M^{n}$
as follows
\begin{eqnarray} \label{LF}
&&\Delta
h_{ij}=\sum\limits_{k=1}^{n}\left(h_{kk,ij}+\overline{R}_{0kik,j}+\overline{R}_{0ijk,k}\right)+
\sum\limits_{k=1}^{n}\left(h_{kk}\overline{R}_{0ij0}+h_{ij}\overline{R}_{0k0k}\right)+\nonumber\\
&&\qquad\qquad
\sum\limits_{m,k=1}^{n}\left(h_{mj}\overline{R}_{mkik}+2h_{mk}\overline{R}_{mijk}+h_{mi}\overline{R}_{mkjk}\right)\nonumber\\
&&\qquad\quad
-\sum\limits_{m,k=1}^{n}\left(h_{mi}h_{mj}h_{kk}+h_{km}h_{mj}h_{ik}-h_{km}h_{mk}h_{ij}-h_{mi}h_{mk}h_{kj}\right),
\end{eqnarray}
where $R$ and $\overline{R}$ are the curvature tensors of $M^{n}$
and $\overline{M}^{n+1}$ respectively, $A:=h_{ij}w_{i}w_{j}$ is the
second fundamental form with $h_{ij}$ the coefficient components of
the tensor $A$, $\Delta$ is the Laplacian on the hypersurface
$M^{n}$, and, as usual, the comma ``," in subscript of a given
tensor means doing covariant derivatives. For detailed derivation of
the above formulae, we refer readers to, e.g., \cite[Section
2]{hzl}.

\begin{remark}
\rm{ There is one thing we prefer to mention here, that is, by using
the symmetry of the second fundamental form, we have
$\sum_{m,k=1}^{n}\left(h_{km}h_{mj}h_{ik}-h_{mi}h_{mk}h_{kj}\right)=0$,
which implies that the last term in the RHS of (\ref{LF}) becomes
\begin{eqnarray*}
-\sum\limits_{m,k=1}^{n}\left(h_{mi}h_{mj}h_{kk}-h_{km}h_{mk}h_{ij}\right).
\end{eqnarray*}
Here we insist on writing the Laplacian of $h_{ij}$ as (\ref{LF}) in
order to emphasize the origin of this formula.
 }
\end{remark}

Clearly, in our setting here, all formulae mentioned above can be
used directly with $\overline{M}^{n+1}=\mathbb{R}^{n+1}_{1}$ and
$\overline{g}=\langle\cdot,\cdot\rangle_{L}$.

For convenience, in the sequel we will use the Einstein summation
convention -- repeated superscripts and subscripts should be made
summation from $1$ to $n$. Given an $n$-dimensional Riemannian
manifold $M^{n}$ with the metric $g$, denote by
$\{y^{i}\}_{i=1}^{n}$ the local coordinate of $M^{n}$, and
$\frac{\partial}{\partial y^{i}}$, $i=1,2,\cdots,n$, the
corresponding coordinate vector fields ($\partial_{i}$ for short).
The Riemannian curvature $(1,3)$-tensor $R$ of $M^{n}$ can be
defined by
\begin{equation*}
R(X, Y)Z=-\nabla_{X}\nabla_{Y}Z+\nabla_{Y}\nabla_{X}Z+\nabla_{[X,
Y]}Z,
\end{equation*}
where $X,Y,Z\in\mathscr{X}(M)$ are tangent vector fields in the
tangent bundle $\mathscr{X}(M)$ of $M^{n}$, $\nabla$ is the gradient
operator on $M^{n}$, and, as usual, $[\cdot,\cdot]$ stands for the
Lie bracket. The component of the curvature tensor $R$ is defined by
\begin{equation*}
R\bigg({\frac{\partial}{\partial y^{i}}}, {\frac{\partial}{\partial
y^{j}}}\bigg){\frac{\partial}{\partial y^{k}}} \doteq
R_{ijk}^{l}{\frac{\partial}{\partial y^{l}}}
\end{equation*}
and $R_{ijkl}:= g_{mi}R_{jkl}^{m}$. Now, let us go back to our
setting -- the evolution of strictly mean convex spacelike graphs in
$\mathbb{R}^{n+1}_{1}$ along the IMCF with zero NBC. The second
fundamental form of the hypersurface $M_{t}^{n}=X(M^{n},t)$ w.r.t.
$\nu$ is given by
\begin{eqnarray} \label{add-xxx}
h_{ij}=\left\langle X_{,ij}, \nu\right\rangle_{L},
 \end{eqnarray}
where $\langle\nu,\nu\rangle_{L}=-1$, $X_{,ij}:=\partial_i
\partial_j X-\Gamma_{ij}^{k}X_k$ with $\Gamma_{ij}^{k}$ the
Christoffel symbols of the metric on $M^{n}_{t}$. Here we would like
to emphasize one thing, that is,
$X_{k}=(X_{t})_{\ast}(\partial_{k})$ with $(X_{t})_{\ast}$ the
tangential mapping induced by the map $X_{t}$. It is easy to have
the following identities
\begin{equation}\label{Gauss for}
X_{,ij}=-h_{ij}\nu, \qquad (\mathrm{Gauss~formula})
\end{equation}
\begin{equation}\label{Wein for}
\nu_{,i}=-h_{ij}X^j, \qquad (\mathrm{Weingarten~formula})
\end{equation}
Besides, using (\ref{Gauss}), (\ref{Codazzi}) and (\ref{LF}) with
the fact $\overline{R}=0$ in our setting, we have\footnote{~Clearly,
in the formula (\ref{Codazzi-1}), $\nabla_{k}h_{ij}$ should be
$\nabla_{\partial_{k}}h_{ij}$. In the sequel, without confusion and
if needed, we prefer to simplify covariant derivatives like this.}
\begin{equation}\label{Gauss-1}
R_{ijkl}=h_{il}h_{jk}-h_{ik}h_{jl},
\end{equation}
\begin{equation}\label{Codazzi-1}
\nabla_{k}h_{ij}=\nabla_{j}h_{ik}, \qquad (i.e.,~h_{ij,k}=h_{ik,j})
\end{equation}
and
\begin{eqnarray}\label{Laplace}
\Delta h_{ij}=H_{,ij}-H h_{ik}h^{k}_{j}+h_{ij}|A|^2.
\end{eqnarray}

We make an agreement that, for simplicity, in the sequel the comma
``," in subscripts will be omitted unless necessary.

\section{The scalar version of the flow equation} \label{se3}

Since the spacelike $C^{2,\gamma}$-hypersurface $M^{n}_{0}$ can be
written as a graph of $M^{n}\subset\mathscr{H}^{n}(1)$, there exists
a function $u_0\in C^{2,\gamma} (M^{n})$ such that
 $X_0: M^{n} \rightarrow \mathbb{R}^{n+1}_{1}$ has the form $x \mapsto
G_{0}:=(x,u_0(x))$. The hypersurface $M_{t}^{n}$ given by the
embedding
\begin{eqnarray*}
X(\cdot, t): M^{n}\rightarrow \mathbb{R}^{n+1}_{1}
\end{eqnarray*}
at time $t$ may be represented as a graph over $M^n\subset
\mathscr{H}^{n}(1)$, and then we can make ansatz
\begin{eqnarray*}
X(x,t)=\left(x,u(x,t)\right)
\end{eqnarray*}
for some function $u: M^{n} \times [0,T) \rightarrow \mathbb{R}$.
The following formulae are needed.

\begin{lemma} \label{lemma2-1}
Define $p:=X(x,t)$ and assume that a point on $\mathscr{H}^{n}(1)$
is described by local coordinates $\xi^{1},\ldots,\xi^{n}$, that is,
$x=x(\xi^{1},\ldots,\xi^{n})$. By the abuse of notations, let
$\partial_i$ be the corresponding coordinate fields on
$\mathscr{H}^{n}(1)$ and
$\sigma_{ij}=g_{\mathscr{H}^{n}(1)}(\partial_i,\partial_j)$ be the
Riemannian metric on $\mathscr{H}^{n}(1)$. Of course,
$\{\sigma_{ij}\}_{i,j=1,2,\ldots,n}$ is also the metric on
$M^{n}\subset\mathscr{H}^{n}(1)$. Following the agreement before,
denote by $u_{i}:=D_{i}u$, $u_{ij}:=D_{j}D_{i}u$, and
$u_{ijk}:=D_{k}D_{j}D_{i}u$ the covariant derivatives of $u$ w.r.t.
the metric $g_{\mathscr{H}^{n}(1)}$, where $D$ is the covariant
connection on $\mathscr{H}^{n}(1)$. Let $\nabla$ be the Levi-Civita
connection of $M_{t}^{n}$ w.r.t. the metric
$g:=u^{2}g_{\mathscr{H}^{n}(1)}-dr^{2}$ induced from the Lorentzian
metric $\langle\cdot,\cdot\rangle_{L}$ of $\mathbb{R}^{n+1}_{1}$.
Then, the following formulae hold:

(i) The tangential vector on $M_{t}^{n}$ is
\begin{eqnarray*}
X_{i}=\partial_{i}+u_i\partial_{r},
\end{eqnarray*}
and the corresponding past-directed timelike unit normal vector is
given by
\begin{eqnarray*}
\nu=-\frac{1}{v}\left(\partial_r+\frac{1}{u^2}u^j\partial_j\right),
\end{eqnarray*}
where $u^{j}:=\sigma^{ij}u_{i}$, and $v:=\sqrt{1-u^{-2}|D u|^2}$
with $D u$ the gradient of $u$.

(ii) The induced metric $g$ on $M_{t}^{n}$ has the form
\begin{equation*}
g_{ij}=u^2\sigma_{ij}-u_{i} u_{j},
\end{equation*}
and its inverse is given by
\begin{equation*}
g^{ij}=\frac{1}{u^2}\left(\sigma^{ij}+\frac{u^i  u^j
}{u^2v^{2}}\right).
\end{equation*}

(iii) The second fundamental form of $M_{t}^{n}$ is given by
\begin{eqnarray*}
h_{ij}=-\frac{1}{v}\left(\frac{2}{u}{u_i  u_j }-u_{ij} -u
\sigma_{ij}\right),
\end{eqnarray*}
and
\begin{eqnarray*}
h^i_j=g^{ik}h_{jk}=\left(\frac{1}{u v}\delta^i_j+\frac{1}{u
v}\widetilde{\sigma}^{ik}\varphi_{jk}\right),
\qquad\widetilde{\sigma}^{ij}=\sigma^{ij}+\frac{\varphi^i
\varphi^j}{v^2}.
 \end{eqnarray*}
Naturally, the mean curvature is given by
\begin{eqnarray*}
H=\sum_{i=1}^{n}h^i_i=\frac{1}{u
v}\bigg(n+(\sigma^{ij}+\frac{\varphi^i\varphi^j}{v^{2}})\varphi_{ij}\bigg).
\end{eqnarray*}
where $\varphi=\log u$.

(iv) Let $p=X(x,t)\in \Sigma^n$ with $x\in\partial M^{n}$,
$\hat{\mu}(p)\in T_{p}M^{n}_{t}$, $\hat{\mu}(p)\notin T_{p}\partial
M^{n}_{t}$, $\mu=\mu^i(x)
\partial_{i}(x)$ at $x$, with $\partial _{i}$ the basis vectors of $T_{x}M^{n}$. Then
\begin{eqnarray*}
\langle\hat{\mu}(p), \nu(p) \rangle_{L}=0 \Leftrightarrow \mu^i(x)
u_i(x,t)=0.
\end{eqnarray*}

\end{lemma}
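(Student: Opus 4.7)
The plan is to exploit the hyperbolic polar decomposition of $\mathbb{R}^{n+1}_{1}$ adapted to $p = r\cdot y$ with $y\in \mathscr{H}^{n}(1)$, $r>0$; under this the Minkowski metric splits as $\langle\cdot,\cdot\rangle_{L} = -dr^{2} + r^{2}\sigma$, so in coordinates $(\xi^{1},\dots,\xi^{n},r)$ one has $\langle\partial_{i},\partial_{j}\rangle_{L} = r^{2}\sigma_{ij}$, $\langle\partial_{i},\partial_{r}\rangle_{L}=0$, and $\langle\partial_{r},\partial_{r}\rangle_{L}=-1$. The chain rule applied to the graph $X(x,t) = u(x,t)\cdot x$ then immediately yields $X_{i} = \partial_{i} + u_{i}\partial_{r}$, establishing the tangential formula in (i). For the past-directed timelike unit normal I make the ansatz $\nu = a\partial_{r} + b^{j}\partial_{j}$: orthogonality $\langle\nu,X_{i}\rangle_{L}=0$ forces $b^{j} = au^{j}/u^{2}$, the constraint $\langle\nu,\nu\rangle_{L}=-1$ gives $a^{2}v^{2}=1$, and past-directedness selects $a=-1/v$, completing (i).

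Part (ii) follows from direct expansion of $\langle X_{i},X_{j}\rangle_{L}$, and I verify the inverse using the ansatz $g^{ij} = u^{-2}\sigma^{ij} + \beta u^{i}u^{j}$ with $\beta$ fixed by $g_{ik}g^{kj}=\delta_{i}^{j}$ (a one-line Sherman--Morrison-type computation yielding $\beta = 1/(u^{4}v^{2})$). The main computational step is (iii). I compute $\partial_{i}\partial_{j}X$ from $X = u\,y(\xi)$, using the Gauss formula for $\mathscr{H}^{n}(1)\subset\mathbb{R}^{n+1}_{1}$: since $y$ is the timelike unit position-normal and $\langle y_{j},y\rangle_{L}=0$, differentiating yields $\partial_{i}y_{j} = \hat{\Gamma}_{ij}^{k}y_{k} + \sigma_{ij}\,y$, where $\hat{\Gamma}_{ij}^{k}$ are the Christoffel symbols of $\sigma$. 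Pairing the resulting expression with $\nu$ via the readily computed identities $\langle y,\nu\rangle_{L}=1/v$ and $\langle y_{k},\nu\rangle_{L}=-u_{k}/(uv)$, the ambient Christoffel terms cancel exactly against the partial-to-covariant conversion $\partial_{i}\partial_{j}u = u_{ij} + \hat{\Gamma}_{ij}^{k}u_{k}$ on $\mathscr{H}^{n}(1)$, producing the stated formula for $h_{ij}$. Tracking this cancellation with the correct signs, so that ambient and intrinsic differentiations reconcile, is the step most prone to error and is the main obstacle.

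The remaining assertions are algebraic. Introducing $\varphi = \log u$, so that $\varphi_{i}=u_{i}/u$, $\varphi_{ij}=u_{ij}/u - \varphi_{i}\varphi_{j}$, and $|D\varphi|^{2}=1-v^{2}$, the $h_{ij}$ formula rewrites as $h_{ij} = (u/v)(\sigma_{ij}+\varphi_{ij}-\varphi_{i}\varphi_{j})$. Contracting with $g^{ij}$ and collecting the $\varphi^{i}\varphi_{j}$ contributions, the coefficient $-1 + v^{-2} - (1-v^{2})/v^{2}$ equals zero, forcing the collapse to $h^{i}_{j} = (uv)^{-1}(\delta^{i}_{j} + \widetilde{\sigma}^{ik}\varphi_{jk})$, and tracing gives the formula for $H$. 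For (iv), the radially invariant extension of $\mu = \mu^{i}(x)\partial_{i}$ is an ambient vector in $T_{p}\mathbb{R}^{n+1}_{1}$, tangent to $\mathscr{H}^{n}(1)$ at $x$ and hence Lorentz-orthogonal to $\partial_{r}$; substituting into $\langle\mu,\nu\rangle_{L}$ and using $\langle\partial_{i},\partial_{j}\rangle_{L} = u^{2}\sigma_{ij}$ at $p$, the pairing reduces to $-\mu^{i}u_{i}/(uv)$, which vanishes iff $\mu^{i}u_{i}=0$ since $u,v>0$.
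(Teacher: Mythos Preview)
Your proof is correct and follows essentially the same route as the paper. The paper's argument records the warped-product Christoffel symbols $\overline{\Gamma}^{k}_{ij}=\Gamma^{k}_{ij}$, $\overline{\Gamma}^{0}_{ij}=u\sigma_{ij}$, $\overline{\Gamma}^{k}_{i0}=u^{-1}\delta^{k}_{i}$ for the polar metric $-dr^{2}+r^{2}\sigma$ and then declares the formulae follow by direct calculation; your use of the position-vector decomposition $X=u\,y$ together with the Gauss identity $\partial_{i}y_{j}=\hat{\Gamma}^{k}_{ij}y_{k}+\sigma_{ij}\,y$ for the embedding $\mathscr{H}^{n}(1)\subset\mathbb{R}^{n+1}_{1}$ is the same computation in equivalent packaging, since $\partial_{r}=y$ and $\partial_{i}=r\,y_{i}$ at $r=u$. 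One harmless slip: in (iv) the pairing is $\langle\mu^{i}\partial_{i},\nu\rangle_{L}=-\mu^{i}u_{i}/v$, not $-\mu^{i}u_{i}/(uv)$, but this does not affect the equivalence.
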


\begin{proof}
Following the convention on the usage of notations in Section
\ref{se2}, let $\overline{\nabla}$ be the covariant connection of
$\mathbb{R}^{n+1}_{1}$. On one hand,
\begin{eqnarray*}
h_{ij}=\langle X_{ij},\nu\rangle_{L}=\langle
\overline{\nabla}_{ij}X, \nu\rangle_{L} =\langle
\overline{\nabla}_{\partial_i}\partial_{j}
+u_i\overline{\nabla}_{\partial_j}\partial_{r}+u_j\overline{\nabla}_{\partial_i}\partial_{r}+u_iu_j
\overline{\nabla}_{\partial_r}\partial_{r}, \nu\rangle_{L}.
\end{eqnarray*}
On the other hand, it is easy to obtain
\begin{eqnarray*}
\overline{\Gamma}_{ij}^{k}=\Gamma_{ij}^{k}, \quad
\overline{\Gamma}_{ij}^{k}=u\sigma_{ij},\quad
\overline{\Gamma}_{i0}^{k}=\frac{1}{u}\delta_{i}^{k}, \quad
\overline{\Gamma}_{0i}^{0}=\overline{\Gamma}_{00}^{i}=\overline{\Gamma}_{00}^{0}=0,
\end{eqnarray*}
where $\Gamma_{ij}^{k}$ denote the Christoffel symbols of
$\mathscr{H}^{n}(1)$ w.r.t. the basis
$\{\partial_{1},\partial_{2},\cdots,\partial_{n}\}$ and
$\overline{\Gamma}_{IJ}^{K}$ stand for the Christoffel symbols of
$\mathbb{R}_{1}^{n+1}$ w.r.t. the basis
$\{\partial_{0}=\partial_{r},\partial_{1},\cdots,\partial_{n}\}$.
Then, these formulae can be verified by direct calculation.
\end{proof}

Using techniques as in Ecker \cite{Eck} (see also \cite{Ge90, Ge3,
Mar}), the problem \eqref{Eq} is degenerated into solving the
following scalar equation with the corresponding initial data and
the corresponding NBC
\begin{equation}\label{Eq-}
\left\{
\begin{aligned}
&\frac{\partial u}{\partial t}=-\frac{v}{H} \qquad &&~\mathrm{in}~
M^n\times(0,\infty)\\
&D_{\mu} u=0  \qquad&&~\mathrm{on}~ \partial M^n\times(0,\infty)\\
&u(\cdot,0)=u_{0} \qquad &&~\mathrm{in}~M^n.
\end{aligned}
\right.
\end{equation}
By Lemma \ref{lemma2-1}, define a new function $\varphi(x,t)=\log
u(x, t)$ and then the mean curvature can be rewritten as
\begin{eqnarray*}
H=\sum_{i=1}^{n}h^i_i=\frac{e^{-\varphi}}{
v}\bigg(n+(\sigma^{ij}+\frac{\varphi^{i}\varphi^{j}}{v^{2}})\varphi_{ij}\bigg).
\end{eqnarray*}
Hence, the evolution equation in \eqref{Eq-} can be rewritten as
\begin{eqnarray*}
\frac{\partial}{\partial t}\varphi=- (1-|D\varphi|^2)\frac{1}
{[n+(\sigma^{ij}+\frac{\varphi^{i}
\varphi^{j}}{v^2})\varphi_{ij}]}:=Q(D\varphi, D^2\varphi).
\end{eqnarray*}
In particular,
 \begin{eqnarray*}
\left(n+(\sigma^{ij}+\frac{\varphi_0^{i}
\varphi_0^{j}}{v^2})\varphi_{0,ij}\right)
 \end{eqnarray*}
 is positive on $M^n$, since $M_0$ is strictly mean convex.
Thus, the problem \eqref{Eq} is again reduced to solve the following
scalar equation with the NBC and the initial data
\begin{equation}\label{Evo-1}
\left\{
\begin{aligned}
&\frac{\partial \varphi}{\partial t}=Q(D\varphi, D^{2}\varphi) \quad
&& \mathrm{in} ~M^n\times(0,T)\\
&D_{\mu} \varphi =0  \quad && \mathrm{on} ~ \partial M^n\times(0,T)\\
&\varphi(\cdot,0)=\varphi_{0} \quad && \mathrm{in} ~ M^n,
\end{aligned}
\right.
\end{equation}
where
$$\left(n+(\sigma^{ij}+\frac{\varphi_0^{i}
\varphi_0^{j}}{v^2})\varphi_{0,ij}\right)$$ is positive on $M^n$.
Clearly, for the initial spacelike graphic hypersurface $M_{0}^{n}$,
$$\frac{\partial Q}{\partial \varphi_{ij}}\Big{|}_{\varphi_0}=\frac{1}{u^{2}H^{2}}(\sigma^{ij}+\frac{\varphi_0^{i}
\varphi_0^{j}}{v^2})$$ is positive on $M^n$. Based on the above
facts, as in \cite{Ge90, Ge3, Mar}, we can get the following
short-time existence and uniqueness for the parabolic system
\eqref{Eq}.

\begin{lemma}
Let $X_0(M^n)=M_{0}^{n}$ be as in Theorem \ref{main1.1}. Then there
exist some $T>0$, a unique solution  $u \in
C^{2+\gamma,1+\frac{\gamma}{2}}(M^n\times [0,T]) \cap C^{\infty}(M^n
\times (0,T])$, where $\varphi(x,t)=\log u(x,t)$, to the parabolic
system \eqref{Evo-1} with the matrix
 \begin{eqnarray*}
\left(n+(\sigma^{ij}+\frac{\varphi^{i}
\varphi^{j}}{v^2})\varphi_{ij}\right)
 \end{eqnarray*}
positive on $M^n$. Thus there exists a unique map $\psi:
M^n\times[0,T]\rightarrow M^n$ such that $\psi(\partial M^n
,t)=\partial M^n$ and the map $\widehat{X}$ defined by
\begin{eqnarray*}
\widehat{X}: M^n\times[0,T)\rightarrow \mathbb{R}^{n+1}_{1}:
(x,t)\mapsto X(\psi(x,t),t)
\end{eqnarray*}
has the same regularity as stated in Theorem \ref{main1.1} and is
the unique solution to the parabolic system \eqref{Eq}.
\end{lemma}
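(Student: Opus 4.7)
The plan is to deduce the lemma from the standard short-time existence theory for quasilinear parabolic equations with oblique (here Neumann) boundary condition, since most of the heavy lifting has already been done in setting up equation \eqref{Evo-1}. The strategy splits naturally into (a) verifying the structural hypotheses that make the classical theory applicable to \eqref{Evo-1}, (b) quoting the existence/regularity theorem to produce the scalar solution $\varphi$, and (c) recovering the parametric solution to \eqref{Eq} via an auxiliary tangential diffeomorphism $\psi$.

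For step (a), I first observe that the operator $Q(D\varphi,D^{2}\varphi)$ is quasilinear with coefficient matrix
\[
a^{ij}(D\varphi) \;=\; \frac{\partial Q}{\partial \varphi_{ij}} \;=\; \frac{1}{u^{2}H^{2}}\!\left(\sigma^{ij}+\frac{\varphi^{i}\varphi^{j}}{v^{2}}\right),
\]
which, as already noted in the excerpt, is positive definite at $\varphi_{0}$; by continuity it remains uniformly elliptic on a $C^{2,\gamma}$-neighbourhood of $\varphi_{0}$, so the problem is uniformly parabolic near $t=0$. Second, the boundary operator $B\varphi = D_{\mu}\varphi$ is linear and oblique in the strict sense, since by the hypothesis $\mu(x)\in T_{x}M^{n}\setminus T_{x}\partial M^{n}$ it has a non-zero component in the inward conormal direction of $\partial M^{n}$ inside $(M^{n},\sigma)$. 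The zero-order compatibility condition $B\varphi_{0}=0$ on $\partial M^{n}$ follows at once from the assumption $\langle\mu\circ X_{0},\nu_{0}\circ X_{0}\rangle_{L}=0$ combined with Lemma~\ref{lemma2-1}(iv), which translates this Lorentzian orthogonality into $\mu^{i}u_{0,i}=0$, and hence $D_{\mu}\varphi_{0}=0$. The first-order compatibility $\partial_{t}(B\varphi)|_{t=0}=B(\partial_{t}\varphi)|_{t=0}=B(Q(D\varphi_{0},D^{2}\varphi_{0}))=0$ will be verified by differentiating the NBC tangentially along $\partial M^{n}$ and using that $\mu$ is independent of $t$ together with the geometric structure inherited from the cone $\Sigma^{n}$; this is the step on which I expect to spend the most care, as it is the classical corner-compatibility obstruction for parabolic Neumann problems.

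For step (b), once parabolicity and compatibility are established, I invoke the standard short-time existence theorem for quasilinear parabolic equations with oblique boundary condition on a bounded domain with $C^{2,\gamma}$ boundary (Ladyzhenskaya--Solonnikov--Ural'tseva, Chapter~V; or equivalently the formulation used in Huisken~\cite{gh3} and Marquardt~\cite{Mar}). This yields a unique solution
\[
\varphi\in C^{2+\gamma,\,1+\gamma/2}(M^{n}\times[0,T])
\]
for some $T>0$, with $a^{ij}(D\varphi)$ remaining uniformly positive definite throughout $[0,T]$ up to possibly shrinking $T$. Smoothness $\varphi\in C^{\infty}(M^{n}\times(0,T])$ then follows by standard parabolic Schauder bootstrapping: differentiating the equation and the Neumann condition yields, for each multi-index, a linear oblique-boundary parabolic equation for the corresponding derivative whose coefficients lie in the previously established Hölder class, so interior and boundary Schauder estimates (Lieberman) produce the next order of regularity iteratively.

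For step (c), the map $\widehat X(x,t)=X(\psi(x,t),t)$ must realize the normal-speed flow \eqref{Eq}, not merely the graphical one. To construct $\psi$, I will solve the ODE
\[
\frac{\partial \psi}{\partial t}(x,t) \;=\; -\,T(\psi(x,t),t),\qquad \psi(x,0)=x,
\]
where $T$ is the tangential vector field on $M_{t}^{n}$ determined by decomposing $\partial_{t}X_{\mathrm{graph}}=(0,\partial_{t}u)$ into its normal part $H^{-1}\nu$ and its tangential remainder; this is a smooth ODE on the compact manifold $M^{n}$ and therefore admits a unique global solution on $[0,T]$ with the same regularity as its right-hand side, hence as $\varphi$. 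The boundary-preserving property $\psi(\partial M^{n},t)=\partial M^{n}$ is guaranteed because the NBC $D_{\mu}\varphi=0$ translates, through Lemma~\ref{lemma2-1}(iv), into the statement that the tangential correction $T$ is tangent to $\partial M^{n}$ along $\partial M^{n}$, so the flow of $-T$ preserves $\partial M^{n}$. Uniqueness of $\widehat X$ follows from uniqueness of $\varphi$ together with uniqueness for the ODE defining $\psi$. The main technical obstacle in the whole argument is the first-order corner compatibility mentioned in step (a); once that is in place, the rest is an application of well-established parabolic theory together with the geometric dictionary provided by Lemma~\ref{lemma2-1}.
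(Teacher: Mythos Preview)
Your proposal is correct and follows essentially the same route as the paper, which does not give a proof at all but simply cites \cite{Ge90, Ge3, Mar} for the standard short-time existence theory and the passage from the scalar to the parametric flow. One minor remark: for the regularity class $C^{2+\gamma,1+\gamma/2}$ up to $t=0$ only the zero-order compatibility $D_{\mu}\varphi_{0}=0$ is required, so the first-order corner condition you flag as ``the main technical obstacle'' is not actually needed here (it would matter only for higher regularity at $t=0$, which the paper obtains for $t>0$ by bootstrapping, exactly as you describe).
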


Let $T^{\ast}$ be the maximal time such that there exists some
 \begin{eqnarray*}
u\in C^{2+\gamma,1+\frac{\gamma}{2}}(M^n\times[0,T^{\ast}))\cap
C^{\infty}(M^n\times(0,T^{\ast}))
 \end{eqnarray*}
  which solves \eqref{Evo-1}. In the
sequel, we shall prove a priori estimates for those admissible
solutions on $[0,T]$ where $T<T^{\ast}$.

\section{$C^0$, $\dot{\varphi}$ and gradient estimates} \label{se4}

\begin{lemma}[\bf$C^0$ estimate]\label{lemma3.1}
Let $\varphi$ be a solution of \eqref{Evo-1}, we have
\begin{equation*}
c_1\leq u(x, t) \Theta^{-1}(t, c) \leq c_2, \qquad\quad \forall~
x\in M^n, \ t\in[0,T]
\end{equation*}
for some positive constants $c_{1}$, $c_{2}$, where $\Theta(t,
c):=e^{-\frac{t}{n}+c}$ with
 \begin{eqnarray*}
\inf_{M^n}\varphi(\cdot,0)\leq c\leq \sup_{M^n} \varphi(\cdot,0).
\end{eqnarray*}
\end{lemma}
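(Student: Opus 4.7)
The plan is to apply the parabolic comparison principle to the scalar equation \eqref{Evo-1} for $\varphi = \log u$, using the fact that spatial constants are exact solutions of the flow. Observe that if $\varphi\equiv c$ is a constant function on $M^n$, then $|D\varphi|=0$, $v=1$, $\varphi_{ij}=0$, so $Q(D\varphi,D^2\varphi) = -1/n$. Hence $\varphi(x,t)=c-t/n$ solves \eqref{Evo-1} (the NBC $D_\mu\varphi=0$ is trivially satisfied), which corresponds precisely to $u(x,t)=\Theta(t,c)=e^{c-t/n}$. This suggests taking $\overline{\varphi}(x,t) := \sup_{M^n}\varphi(\cdot,0)-t/n$ and $\underline{\varphi}(x,t) := \inf_{M^n}\varphi(\cdot,0)-t/n$ as super- and sub-solutions respectively, with $\underline{\varphi}(\cdot,0)\leq \varphi(\cdot,0) \leq \overline{\varphi}(\cdot,0)$ by construction.

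The key step is to prove $\underline{\varphi}\leq\varphi\leq\overline{\varphi}$ on $M^n\times[0,T]$ via the maximum principle. Consider $w(x,t):=\varphi(x,t)+t/n - \sup_{M^n}\varphi(\cdot,0)$ and argue that $w\leq 0$. Suppose for contradiction that $w$ attains a positive maximum at some $(x_0,t_0)$ with $t_0>0$. If $x_0$ is interior, then $Dw(x_0,t_0)=D\varphi=0$ (so $v=1$) and $D^2 w(x_0,t_0)=D^2\varphi\leq 0$, whence $\sigma^{ij}\varphi_{ij}(x_0,t_0)\leq 0$; combined with strict mean convexity $n+(\sigma^{ij}+\tfrac{\varphi^i\varphi^j}{v^2})\varphi_{ij}>0$, this forces $0<n+\sigma^{ij}\varphi_{ij}\leq n$ and therefore $w_t(x_0,t_0)=-\frac{1}{n+\sigma^{ij}\varphi_{ij}}+\frac{1}{n}\leq 0$, contradicting $w_t\geq 0$ unless everything collapses to equality (so $w$ is locally constant, which can be excluded by the strong maximum principle applied to the linearized operator). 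If instead $x_0\in\partial M^n$, the Hopf boundary point lemma gives $D_\mu w(x_0,t_0)>0$ for the transverse vector field $\mu$, contradicting the Neumann condition $D_\mu\varphi=0$ from \eqref{Evo-1}. Hence $w\leq 0$, giving $\varphi\leq\overline{\varphi}$; the lower bound $\varphi\geq\underline{\varphi}$ is proved symmetrically.

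Exponentiating these bounds yields $e^{\inf_{M^n}\varphi(\cdot,0)-t/n}\leq u(x,t)\leq e^{\sup_{M^n}\varphi(\cdot,0)-t/n}$, and dividing by $\Theta(t,c)=e^{c-t/n}$ produces
\begin{equation*}
e^{\inf_{M^n}\varphi(\cdot,0)-c}\;\leq\; u(x,t)\,\Theta^{-1}(t,c) \;\leq\; e^{\sup_{M^n}\varphi(\cdot,0)-c}
\end{equation*}
for every $c\in[\inf_{M^n}\varphi(\cdot,0),\sup_{M^n}\varphi(\cdot,0)]$. Since both extremes are bounded by the positive constants $c_1:=e^{\inf_{M^n}\varphi(\cdot,0)-\sup_{M^n}\varphi(\cdot,0)}$ and $c_2:=e^{\sup_{M^n}\varphi(\cdot,0)-\inf_{M^n}\varphi(\cdot,0)}$ depending only on the initial data, the claim follows.

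The main technical issue will be the careful handling of the Neumann boundary: one needs the Hopf lemma version appropriate for the fully nonlinear operator $Q$ linearized along $\varphi$, together with the fact that $\mu$ is strictly transverse to $\partial M^n$ (guaranteed by the assumption $\mu(x)\in T_xM^n\setminus T_x\partial M^n$). Everything else is a standard comparison-principle exercise once one observes that constants solve the flow and the evolution of a round piece of $\mathscr{H}^n(1)$ is an exact self-similar solution of the form $\Theta(t,c)$.
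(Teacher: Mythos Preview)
Your proposal is correct and follows exactly the paper's approach: observe that spatially constant $\varphi$ solves the scalar equation with $\dot\varphi=-1/n$, then compare via the maximum principle with Neumann boundary condition. The paper's proof is terser---it simply states ``using the maximum principle'' after writing down the ODE solution $\varphi(t)=-t/n+c$ and the resulting sandwich $-t/n+\varphi_1\leq\varphi\leq -t/n+\varphi_2$---whereas you spell out the interior/boundary case analysis and the Hopf lemma explicitly; your treatment of the equality case $w_t=0$ is slightly informal (the standard fix is the $\epsilon$-perturbation $w-\epsilon t$ rather than invoking the strong maximum principle), but the argument is sound.
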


\begin{proof}
Let $\varphi(x, t)=\varphi(t)$ (independent of $x$) be  the solution
of \eqref{Evo-1} with $\varphi(0)=c$. In this case, the first
equation in \eqref{Evo-1} reduces to an ODE
\begin{eqnarray*}
\frac{d}{d t}\varphi=-\frac{1}{n}.
\end{eqnarray*}
Therefore,
\begin{eqnarray}\label{blow}
\varphi(t)=-\frac{1}{n}t+c.
\end{eqnarray}
Using the maximum principle, we can obtain that
\begin{equation}\label{C^0}
-\frac{1}{n}t+\varphi_{1}\leq\varphi(x, t)
\leq-\frac{1}{n}t+\varphi_{2},
\end{equation}
where $\varphi_1:=\inf_{M^n}\varphi(\cdot,0)$ and
$\varphi_2:=\sup_{M^n} \varphi(\cdot,0)$. The estimate is obtained
since $\varphi =\log u$.
\end{proof}

\begin{lemma}[\bf$\dot{\varphi}$ estimate]\label{lemma3.2}
Let $\varphi$ be a solution of \eqref{Evo-1} and $\Sigma^n$ be the
boundary of a smooth, convex domain defined as in Theorem
\ref{main1.1}, then
\begin{eqnarray*}
\inf_{M^n}\dot{\varphi}(\cdot, 0) \leq \dot{\varphi}(x, t)\leq
\sup_{M^{n}}\dot{\varphi}(\cdot, 0).
\end{eqnarray*}
\end{lemma}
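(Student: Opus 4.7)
The plan is to differentiate the scalar evolution equation in time and apply the parabolic maximum principle (together with Hopf's lemma for the oblique boundary condition) to the time-derivative $w := \dot\varphi = \partial_t \varphi$. The key observation is that the operator $Q(D\varphi, D^{2}\varphi)$ in \eqref{Evo-1} depends neither on $t$ nor on $\varphi$ itself, only on $D\varphi$ and $D^{2}\varphi$. Therefore, differentiating $\partial_{t}\varphi = Q(D\varphi, D^{2}\varphi)$ in $t$ yields
$$
\partial_{t} w \;=\; Q^{ij}\, w_{ij} \;+\; Q^{i}\, w_{i}, \qquad
Q^{ij} := \frac{\partial Q}{\partial \varphi_{ij}}, \quad Q^{i} := \frac{\partial Q}{\partial \varphi_{i}},
$$
which is a linear, homogeneous (no zeroth-order term) second-order parabolic PDE for $w$. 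As already observed right after \eqref{Evo-1}, the matrix $Q^{ij}$ is positive on the set of strictly mean convex spacelike graphs, so this equation is parabolic along the flow (as long as strict mean convexity persists, i.e.\ $H>0$).

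Next, since $\mu$ does not depend on $t$, differentiating the boundary condition $D_{\mu}\varphi = 0$ in time gives $D_{\mu} w = 0$ on $\partial M^{n}\times(0,T)$. Hence $w$ satisfies a linear parabolic PDE with vanishing oblique boundary data and smooth initial data $w(\cdot,0) = Q(D\varphi_{0}, D^{2}\varphi_{0})$.

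Now apply the parabolic maximum principle to $w$. If the maximum of $w$ on $M^{n}\times[0,T]$ is attained at an interior point $(x_{0},t_{0})$ with $t_{0}>0$, the usual maximum principle (since there is no zeroth-order term) propagates the maximum backward, giving $w(x_{0},t_{0}) \leq \sup_{M^{n}} w(\cdot,0)$. If the maximum is attained at a boundary point $x_{0}\in\partial M^{n}$ with $t_{0}>0$, the fact that $\mu$ is transversal to $\partial M^{n}$ (by hypothesis, $\mu(x)\in T_{x}M^{n}\setminus T_{x}\partial M^{n}$) allows us to invoke Hopf's lemma for the oblique derivative: either $w$ is constant in a parabolic neighborhood (in which case the max is equally attained on $\{t=0\}$), or $D_{\mu}w(x_{0},t_{0}) \neq 0$, contradicting the boundary condition $D_{\mu}w = 0$. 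Either way, $\sup_{M^{n}\times[0,T]} w \leq \sup_{M^{n}} \dot\varphi(\cdot,0)$. Applying the same argument to $-w$ yields $\inf_{M^{n}\times[0,T]} w \geq \inf_{M^{n}}\dot\varphi(\cdot,0)$, which is the claimed two-sided bound.

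The main obstacle is verifying that the oblique Hopf lemma is applicable at boundary extrema; this amounts to checking that $\mu$ is strictly transversal to $\partial M^{n}$, which is exactly the hypothesis $\mu(x)\notin T_{x}\partial M^{n}$, so no further effort is required. One should note that the argument tacitly uses strict mean convexity ($H>0$) throughout $[0,T]$, ensuring parabolicity of the linearized equation; this is part of the hypothesis (the flow is considered on the maximal admissible interval where strict mean convexity persists), so no circularity arises here.
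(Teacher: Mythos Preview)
Your proposal is correct and follows essentially the same approach as the paper: set $\mathcal{M}=\dot\varphi$, differentiate the evolution equation in $t$ to obtain the linear parabolic problem $\partial_t\mathcal{M}=Q^{ij}D_{ij}\mathcal{M}+Q^{k}D_k\mathcal{M}$ with $D_\mu\mathcal{M}=0$ on the boundary, and apply the maximum principle. The paper simply cites the maximum principle without spelling out the Hopf argument at the boundary, but your treatment of that point is accurate and adds no new ingredient.
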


\begin{proof}
Set
\begin{eqnarray*}
\mathcal{M}(x,t)=\dot{\varphi}(x, t).
\end{eqnarray*}
Differentiating both sides of the first evolution equation of
\eqref{Evo-1}, it is easy to get that
\begin{equation} \label{3.4}
\left\{
\begin{aligned}
&\frac{\partial\mathcal{M}}{\partial t}=
Q^{ij}D_{ij}\mathcal{M}+Q^{k}D_k \mathcal{M} \quad
&& \mathrm{in} ~M^n\times(0,T)\\
&D_{\mu}\mathcal{M}=0 \quad && \mathrm{on} ~\partial M^n\times(0,T)\\
&\mathcal{M}(\cdot,0)=\dot{\varphi}_0 \quad && \mathrm{on} ~ M^n,
\end{aligned}
\right.
\end{equation}
where $Q^{ij}:=\frac{ \partial Q}{\partial \varphi_{ij}}$
 and $Q^k:=\frac{ \partial Q}{\partial \varphi_{k}}$.
Then the result follows from the maximum principle.
\end{proof}

\begin{lemma}[\bf Gradient estimate]\label{Gradient}
Let $\varphi$ be a solution of \eqref{Evo-1} and $\Sigma^n$ be the
boundary of a smooth, convex domain described as in Theorem
\ref{main1.1}. Then we have,
\begin{equation}\label{Gra-est}
|D\varphi|\leq \sup_{M^n}|D\varphi(\cdot, 0)|<1, \qquad\quad
\forall~ x\in M^n, \ t\in[0,T].
\end{equation}
\end{lemma}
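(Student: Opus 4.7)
The plan is to apply the parabolic maximum principle to the squared gradient $\omega := \tfrac{1}{2}|D\varphi|^{2} = \tfrac{1}{2}\sigma^{ij}\varphi_{i}\varphi_{j}$ and show that its maximum over $M^{n}\times[0,T]$ is attained at $t=0$. The strict bound $|D\varphi(\cdot,0)|<1$ then comes for free from the spacelike graphicality of $M_{0}^{n}$, since $v_{0}=\sqrt{1-|D\varphi(\cdot,0)|^{2}}>0$ on $M^{n}$.

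First I would derive an evolution inequality for $\omega$. Differentiating the first equation of \eqref{Evo-1} covariantly in a direction $\partial_{k}$ gives
\[
\partial_{t}\varphi_{k} \;=\; Q^{ij}D_{k}\varphi_{ij} + Q^{l}\varphi_{lk},
\]
and contracting with $\varphi^{k}$ produces $\partial_{t}\omega$. Comparing with $Q^{ij}D_{ij}\omega = Q^{ij}\sigma^{kl}\varphi_{kij}\varphi_{l} + Q^{ij}\sigma^{kl}\varphi_{ki}\varphi_{lj}$ requires commuting covariant derivatives on $\mathscr{H}^{n}(1)$, which by the Ricci identity and the fact that $\mathscr{H}^{n}(1)$ has constant sectional curvature $-1$ produces zeroth-order terms linear in $D\varphi$ multiplied by $Q^{ij}$. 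Collecting everything yields an inequality of the schematic form
\[
\partial_{t}\omega \;\le\; Q^{ij}D_{ij}\omega + Q^{l}D_{l}\omega \;-\; Q^{ij}\sigma^{kl}\varphi_{ki}\varphi_{lj} + (\text{negative-curvature terms}).
\]
Here $Q^{ij}$ is a positive multiple of $\sigma^{ij}+\varphi^{i}\varphi^{j}/v^{2}$, hence positive definite as long as $|D\varphi|<1$; the $-Q^{ij}\sigma^{kl}\varphi_{ki}\varphi_{lj}$ term is non-positive; and the negative sectional curvature of $\mathscr{H}^{n}(1)$ produces terms with a favorable sign, so the right-hand side is controlled by the parabolic operator acting on $\omega$. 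At an interior spatial maximum of $\omega$ one has $D\omega=0$ and $D^{2}\omega\le 0$, so $\partial_{t}\omega\le 0$ there and the interior parabolic maximum principle applies.

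The main obstacle is the boundary analysis. Suppose that $\omega$ attains its space-time maximum at $(x_{0},t_{0})$ with $x_{0}\in\partial M^{n}$ and $t_{0}>0$. Differentiating the Neumann condition $\mu^{i}\varphi_{i}=0$ along a tangential direction $\tau_{\alpha}$ to $\partial M^{n}$ gives $\mu^{i}\tau_{\alpha}^{j}\varphi_{ij}=-(D_{\tau_{\alpha}}\mu^{i})\varphi_{i}$ on $\partial M^{n}$. Using $D_{\mu}\varphi=0$, the full gradient $D\varphi$ is tangential to $\partial M^{n}$ at $x_{0}$, so a direct computation yields
\[
D_{\mu}\omega \;=\; \varphi^{i}\mu^{j}\varphi_{ij} \;=\; -\,II_{\partial M^{n}}(D\varphi,D\varphi),
\]
where $II_{\partial M^{n}}$ denotes the second fundamental form of $\partial M^{n}\subset M^{n}$ with respect to $\mu$. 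By the convexity assumption on $M^{n}$ (see Remark 1.2(1) in the paper), $II_{\partial M^{n}}$ is positive semi-definite, so $D_{\mu}\omega\le 0$ on $\partial M^{n}$. On the other hand, the parabolic Hopf boundary point lemma applied to $\omega$ forces the strict inequality $D_{\mu}\omega>0$ at a boundary maximum (with $\mu$ outward pointing), giving a contradiction.

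Combining the interior and boundary steps, the maximum of $\omega$ on $M^{n}\times[0,T]$ is attained at $t=0$, hence $|D\varphi|(x,t)\le \sup_{M^{n}}|D\varphi(\cdot,0)|$ for all $(x,t)\in M^{n}\times[0,T]$. Since $M_{0}^{n}$ is a spacelike graph one has $\sup_{M^{n}}|D\varphi(\cdot,0)|<1$, giving \eqref{Gra-est}. The delicate point in executing this plan is verifying that the zeroth-order curvature terms arising from commuting covariant derivatives on $\mathscr{H}^{n}(1)$ indeed combine with the negative-definite Hessian term to preserve the differential inequality; once this is checked, the convexity of $\partial M^{n}$ handles the boundary cleanly via the Hopf lemma.
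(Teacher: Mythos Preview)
Your approach is exactly the paper's: set $\psi=\tfrac12|D\varphi|^2$, derive its evolution by differentiating \eqref{Evo-1} and commuting covariant derivatives on $\mathscr{H}^n(1)$, discard the nonpositive Hessian term $-Q^{ij}\varphi_{mi}\varphi^m_j$, use convexity of $\partial M^n$ to obtain $D_\mu\psi\le 0$ on the boundary, and conclude via the maximum principle together with spacelikeness of $M_0^n$ for the strict bound. The only place you are less explicit is the curvature commutator, which the paper computes concretely as $-Q^{ij}(\varphi_i\varphi_j-\sigma_{ij}|D\varphi|^2)$ via $R_{ijml}=\sigma_{il}\sigma_{jm}-\sigma_{im}\sigma_{jl}$ on $\mathscr{H}^n(1)$; you should replace your heuristic ``negative sectional curvature gives a favorable sign'' by this explicit identity and check its sign directly, since that is precisely the ``delicate point'' you flag.
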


\begin{proof}
Set $\psi=\frac{|D \varphi|^2}{2}$. By differentiating  $\psi$, we
have
\begin{equation*}
\begin{aligned}
\frac{\partial \psi}{\partial t} =\frac{\partial}{\partial
t}\varphi_m \varphi^m = \dot{\varphi}_m\varphi^m =Q_m \varphi^m.
\end{aligned}
\end{equation*}
Then using the evolution equation of $\varphi$ in (\ref{Evo-1})
yields
\begin{eqnarray*}
\frac{\partial \psi}{\partial t}=Q^{ij}\varphi_{ijm} \varphi^m
+Q^k\varphi_{km} \varphi^m.
\end{eqnarray*}
Interchanging the covariant derivatives, we have
\begin{equation*}
\begin{aligned}
\psi_{ij}&=D_j(\varphi_{mi} \varphi^m)\\&=\varphi_{mij} \varphi^m+\varphi_{mi} \varphi^m_j\\
&=(\varphi_{ijm}+R^l_{imj}\varphi_{l})\varphi^m+\varphi_{mi}\varphi^m_j.
\end{aligned}
\end{equation*}
Therefore, we can express $\varphi_{ijm} \varphi^m$ as
\begin{eqnarray*}
\varphi_{ijm} \varphi^m =\psi_{ij}-R^l_{imj}\varphi_l
\varphi^m-\varphi_{mi} \varphi^m_j.
\end{eqnarray*}
Then, in view of the fact
$R_{ijml}=\sigma_{il}\sigma_{jm}-\sigma_{im}\sigma_{jl}$ on
$\mathscr{H}^{n}(1)$, we have
\begin{equation}\label{gra}
\begin{aligned}
\frac{\partial \psi}{\partial t}&=Q^{ij}\psi_{ij}+Q^k \psi_k
-Q^{ij}(\varphi_i
\varphi_j-\sigma_{ij}|D\varphi|^2)\\&-Q^{ij}\varphi_{mi}
\varphi^{m}_{j}.
\end{aligned}
\end{equation}

Since the matrix $Q^{ij}$ is positive definite, the third and the
fourth terms in the RHS of \eqref{gra} are non-positive. Since $M^n$
is convex, using a similar argument to the proof of \cite[Lemma
5]{Mar} (see page 1308) implies that
\begin{eqnarray*}
D_{\mu}\psi=-\sum\limits_{i,j=1}^{n-1}h_{ij}^{\partial
M^{n}}D_{e_i}\varphi D_{e_j}\varphi \leq
0~~~~\qquad\mathrm{on}~\partial M^n\times(0,T),
\end{eqnarray*}
where an orthonormal frame at $x\in\partial M^{n}$, with
$e_{1},\ldots,e_{n-1}\in T_{x}\partial M^{n}$ and $e_{n}:=\mu$, has
been chosen for convenience in the calculation, and
$h_{ij}^{\partial M^{n}}$ is the second fundamental form of the
boundary $\partial M^{n}\subset\Sigma^{n}$.
 So, we can get
\begin{equation*}
\left\{
\begin{aligned}
&\frac{\partial \psi}{\partial t}\leq Q^{ij}\psi_{ij}+Q^k\psi_k
\qquad &&\mathrm{in}~
M^n\times(0,T)\\
&D_{\mu} \psi \leq 0   && \mathrm{on}~\partial M^n\times(0,T)\\
&\psi(\cdot,0)=\frac{|D\varphi(\cdot,0)|^2}{2} \qquad
&&\mathrm{in}~M^n.
\end{aligned}\right .\end{equation*}
Using the maximum principle, we have
\begin{equation*}
|D\varphi|\leq \sup_{M^n}|D\varphi(\cdot, 0)|,
\end{equation*}
Since $G_{0}:=\{\left(x,u(x,0)\right)|x\in M^{n}\}$ is a spacelike
graph of $\mathbb{R}^{n+1}_{1}$, so we have
\begin{equation*}
|D\varphi|\leq \sup_{M^n}|D\varphi(\cdot, 0)|<1, \qquad\quad
\forall~ x\in M^n, \ t\in[0,T].
\end{equation*}
Our proof is finished.
\end{proof}

\begin{remark}
\rm{The gradient estimate in Lemma \ref{Gradient} makes sure that
the evolving graphs $G_{t}:=\{\left(x,u(x,t)\right)|x\in M^{n},0\leq
t\leq T\}$ are spacelike graphs.}
\end{remark}

Combing the gradient estimate with $\dot{\varphi}$ estimate, we can
obtain
\begin{corollary}
If $\varphi$ satisfies \eqref{Evo-1}, then we have
\begin{eqnarray}\label{w-ij}
0<c_3\leq  H\Theta \leq c_4<+\infty,
\end{eqnarray}
where $c_3$ and $c_4$ are positive constants independent of
$\varphi$.
\end{corollary}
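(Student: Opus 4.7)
The plan is to observe that the quantity $H\Theta$ can be expressed algebraically, up to bounded factors, in terms of $v$ and $\dot\varphi$, and then invoke the three preceding lemmas.

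First I would extract the key identity from the scalar flow. Starting from $\partial u/\partial t = -v/H$ and $\varphi = \log u$, one finds
\begin{equation*}
\dot\varphi = \frac{u_t}{u} = -\frac{v}{uH},
\qquad\text{hence}\qquad uH = -\frac{v}{\dot\varphi}.
\end{equation*}
Multiplying by $\Theta/u$ gives $H\Theta = -(v/\dot\varphi)\cdot(\Theta/u)$, so the whole task reduces to bounding the three factors $v$, $\dot\varphi$, and $\Theta/u$ from above and below by positive constants.

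Next I would collect the three bounds. By Lemma \ref{lemma3.1} the ratio $u/\Theta$ lies in $[c_1,c_2]$, so $\Theta/u\in[1/c_2,\,1/c_1]$. By Lemma \ref{Gradient} we have $|D\varphi|\le\sup_{M^n}|D\varphi(\cdot,0)|<1$, which yields
\begin{equation*}
0 < v_{\min} := \sqrt{1-\sup_{M^n}|D\varphi(\cdot,0)|^{2}}\;\le\; v \;\le\; 1.
\end{equation*}
Finally, by Lemma \ref{lemma3.2}, $\dot\varphi$ is sandwiched between $\inf_{M^n}\dot\varphi(\cdot,0)$ and $\sup_{M^n}\dot\varphi(\cdot,0)$. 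Because $M_0^n$ is compact and strictly mean convex of class $C^{2,\gamma}$, the initial mean curvature $H_0$ is bounded above and below by positive constants; combined with the initial bounds on $u_0$ and $v_0$, the identity $\dot\varphi(\cdot,0)= -v_0/(u_0 H_0)$ shows that $\dot\varphi(\cdot,0)$ lies in some interval $[-K_2,-K_1]$ with $0<K_1\le K_2<\infty$. Hence $-1/\dot\varphi\in[1/K_2,\,1/K_1]$ throughout the flow.

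Combining these three two-sided bounds in $H\Theta=-(v/\dot\varphi)(\Theta/u)$ yields
\begin{equation*}
0 < \frac{v_{\min}}{c_2\,K_2} \;\le\; H\Theta \;\le\; \frac{1}{c_1\,K_1} < \infty,
\end{equation*}
and we read off $c_3$ and $c_4$. The only subtlety worth flagging is the uniform \emph{strict} negativity of $\dot\varphi(\cdot,0)$, which is exactly where the hypothesis that $M_0^n$ is strictly mean convex (so that $H_0$ is bounded away from $0$ and from $\infty$) is used; all the remaining steps are straightforward algebraic consequences of the estimates already established.
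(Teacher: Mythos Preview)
Your argument is correct and matches the paper's intended approach: the corollary is stated there with only the one-line justification ``Combining the gradient estimate with $\dot\varphi$ estimate, we can obtain\ldots'', and your computation unpacks exactly this. If anything you are slightly more careful than the paper, since you make explicit that the $C^{0}$ estimate (Lemma~\ref{lemma3.1}) is also needed to pass from the bound on $uH=-v/\dot\varphi$ to a bound on $H\Theta$; the paper's sentence mentions only the gradient and $\dot\varphi$ lemmas, but the $C^{0}$ control of $u/\Theta$ is indeed required and is implicitly being used.
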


\section{H\"{o}lder Estimates and Convergence} \label{se5}

Set $\Phi=\frac{1}{H}$, $w=\langle X, \nu\rangle_{L}$ and $\Psi=
\frac{\Phi}{w}$. We can get the following evolution equations.

\begin{lemma}\label{EVQ}
Under the assumptions of Theorem \ref{main1.1}, we have
\begin{eqnarray*}
\frac{\partial}{\partial t}g_{ij}=-2\Phi h_{ij},
\end{eqnarray*}
\begin{eqnarray*}
\frac{\partial}{\partial t}g^{ij}=2\Phi h^{ij},
\end{eqnarray*}
\begin{equation*}
\frac{\partial}{\partial t}\nu=\nabla \Phi,
\end{equation*}
\begin{equation*}
\partial_{t}h_{i}^{j}-\Phi H^{-1}\Delta h_{i}^{j}=-\Phi^{2} |A|^2 h_{i}^{j}-2\Phi^{3} H_i H^j
+2\Phi h_{ik}h^{kj},
\end{equation*}
and
\begin{equation}\label{div-for-1}
 \frac{\partial \Psi }{\partial t}=\mathrm{div}_g (H^{-2} \nabla \Psi)-2H^{-2}\Psi^{-1} |\nabla \Psi|^2.
\end{equation}
\end{lemma}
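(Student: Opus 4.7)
\textbf{Proof proposal for Lemma \ref{EVQ}.} The backbone of the argument is to feed the flow equation $\partial_t X = \Phi\,\nu$ through the Gauss--Weingarten formulas of Section~\ref{se2}, and then, for the curvature identities, to invoke the Simons-type formula \eqref{Laplace} to convert $H_{,ij}$ into $\Delta h_{ij}$. First I would compute $\partial_t X_i = \Phi_{,i}\,\nu - \Phi\, h_i^{\,k} X_k$, using \eqref{Wein for}, and from this deduce $\partial_t g_{ij} = \langle \partial_t X_i, X_j\rangle_L + \langle X_i, \partial_t X_j\rangle_L = -2\Phi h_{ij}$, since the $\Phi_{,i}\nu$ pieces are tangential-killed by $\langle\nu, X_j\rangle_L = 0$. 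The formula $\partial_t g^{ij} = 2\Phi h^{ij}$ follows by differentiating $g^{ik}g_{kj} = \delta^i_j$. For $\partial_t \nu$, since $\langle\nu,\nu\rangle_L = -1$ forces $\partial_t \nu$ to be tangent, I would write $\partial_t \nu = a^k X_k$ and pair against $X_j$: differentiating $\langle\nu, X_j\rangle_L = 0$ in $t$ and using $\partial_t X_j$ yields $a^k g_{kj} = \Phi_{,j}$, hence $a^k = \Phi^{\,k}$ and $\partial_t \nu = \nabla \Phi$.

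Next I would derive the evolution of $h_{ij}$ from $h_{ij} = \langle \partial_i \partial_j X, \nu\rangle_L$ (the tangential Christoffel parts pair to zero against $\nu$). Computing $\partial_t(\partial_i\partial_j X) = \partial_i\partial_j(\Phi\nu)$ and using $\langle \nu_k, \nu\rangle_L = 0$ together with $\langle \nu_i, \nu_j\rangle_L = h_{ik} h_j^{\,k}$, while accounting for the $\Gamma_{ij}^k \Phi_{,k}$ contribution from $\langle \partial_i \partial_j X, \partial_t\nu\rangle_L = \Gamma_{ij}^k \Phi_{,k}$, the noncovariant pieces cancel and yield
\begin{equation*}
\partial_t h_{ij} = -\Phi_{,ij} - \Phi\, h_{ik}\, h_j^{\,k}.
\end{equation*}
Raising one index via $\partial_t h_i^{\,j} = (\partial_t g^{jk}) h_{ik} + g^{jk} \partial_t h_{ik}$ produces
$\partial_t h_i^{\,j} = \Phi\, h_{ik} h^{kj} - \Phi_{,i}^{\ \ j}.$
With $\Phi = 1/H$, one has $-\Phi_{,i}^{\ \ j} = \Phi^2 H_{,i}^{\ j} - 2\Phi^3 H_{,i} H^{,j}$, and substituting the Simons identity \eqref{Laplace} in the form $H_{,i}^{\ j} = \Delta h_i^{\,j} + H h_{ik} h^{kj} - |A|^2 h_i^{\,j}$ collapses the $\Phi^2 H$ coefficient to $\Phi$ (since $\Phi^2 H = \Phi$) and yields exactly the stated identity with $\Phi H^{-1} = \Phi^2$ in front of $\Delta h_i^{\,j}$.

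The final equation for $\Psi = \Phi/w$ is where the main effort lies. I would split it as $\partial_t \Psi = (\partial_t \Phi)/w - (\Phi/w^2)\partial_t w$. Tracing the evolution of $h_i^{\,j}$ gives $\partial_t H = \Phi^2 \Delta H + \Phi |A|^2 - 2\Phi^3 |\nabla H|^2$, from which $\partial_t \Phi = -\Phi^2 \partial_t H$ and the elementary identity $\Delta \Phi = -\Phi^2 \Delta H + 2\Phi^3 |\nabla H|^2$ combine to produce the clean parabolic equation
\begin{equation*}
\partial_t \Phi - \Phi^2 \Delta \Phi = -\Phi^3 |A|^2.
\end{equation*}
For $w = \langle X,\nu\rangle_L$, I would compute $\partial_t w = -\Phi + \langle X, \nabla\Phi\rangle_L$ directly using $\partial_t X = \Phi\nu$, $\partial_t \nu = \nabla\Phi$, and $\langle\nu,\nu\rangle_L = -1$; then compute $\nabla_k w = -h_k^{\,l}\langle X, X_l\rangle_L$ and $\Delta w$ by the Weingarten formula, Codazzi \eqref{Codazzi-1}, and the identity $\nabla_k\langle X, X_l\rangle_L = g_{kl} - h_{kl} w$. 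This yields a companion parabolic identity for $w$, after which combining with the $\Phi$-equation and rewriting $\text{div}_g(H^{-2}\nabla\Psi) = \Phi^2 \Delta \Psi + 2\Phi\,\langle \nabla \Phi, \nabla \Psi\rangle$ via the quotient rule $\nabla\Psi = (\nabla\Phi)/w - (\Phi/w^2)\nabla w$ should exhibit the $-2H^{-2}\Psi^{-1}|\nabla\Psi|^2$ term (whose appearance is naturally explained by $\Psi\cdot w = \Phi$, a product whose logarithmic derivatives produce the quadratic gradient term via $|\nabla(\log\Psi + \log w)|^2$ identities).

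The main obstacle I anticipate is the $\Psi$-equation: the combination $\Phi/w$ is not a geometric invariant of a canonical type, so the divergence structure is not automatic. The key to making the algebra work cleanly will be to track $\nabla\Psi$ and $\nabla w$ in a way that lets the cross terms from $\nabla(\Phi/w)$ regroup exactly into $-2H^{-2}\Psi^{-1}|\nabla\Psi|^2$; I would organize the computation by first deriving the single-variable evolutions of $\Phi$ and $w$, then passing to $\log\Psi = \log\Phi - \log w$ as an intermediate step to simplify the bookkeeping before reinstating $\Psi$ at the end.
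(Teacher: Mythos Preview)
Your proposal is correct and follows essentially the same route as the paper: derive $\partial_t g_{ij}$, $\partial_t\nu$, $\partial_t h_{ij}=-\nabla^2_{ij}\Phi-\Phi h_{ik}h^k_j$ from the Gauss--Weingarten formulas, replace $H_{,ij}$ via the Simons identity \eqref{Laplace}, and then compute the evolutions of $H$ and $w=\langle X,\nu\rangle_L$ separately before combining them into $\Psi$. Your organization of the $\Psi$-computation is slightly tidier than the paper's---you package the intermediate result as the clean equation $\partial_t\Phi-\Phi^2\Delta\Phi=-\Phi^3|A|^2$, and since the paper's computation gives $\partial_t w-\Phi^2\Delta w=-\Phi^2|A|^2 w$, both $\Phi$ and $w$ satisfy the \emph{same} linear parabolic equation, which makes the cancellation of the $|A|^2$ terms in the quotient $\Psi=\Phi/w$ transparent; the paper instead expands everything directly in $\nabla H$, $\nabla w$ and checks the divergence identity term by term. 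Either way the algebra closes, and your proposed $\log\Psi$ bookkeeping is a harmless reformulation of the same brute-force verification.
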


\begin{proof}
It is easy to get the first three evolution equations, and we omit
here. If one really wants to get some hints, we suggest \cite[Lemma
3.1]{GaoY} where the derivation of several evolutions of geometric
quantities (such as the metric, the timelike unit normal vector, the
mean curvature, etc) of spacelike graphic hypersurfaces in Lorentz
manifold along a curvature flow have been shown in details.

 Using the Gauss formula (\ref{Gauss for}),
we have
\begin{equation*}
\begin{split}
\partial_{t}h_{ij}&=
\partial_{t}\langle \partial_i \partial_j X, \nu\rangle_{L}\\
&=\langle \partial_i \partial_j (\Phi \nu), \nu\rangle_{L}
+\langle \Gamma_{ij}^{k}\partial_{k}X-h_{ij}\nu, \partial_{t}\nu\rangle_{L}\\
&=-\partial_i \partial_j \Phi+\Phi\langle\partial_i \partial_j \nu,
\nu\rangle_{L}
+\Gamma_{ij}^{k}\Phi_k\\
&=-\nabla^{2}_{ij} \Phi-\Phi\langle\partial_i (h_{j}^{k}\partial_{k}X), \nu\rangle_{L}\\
&=-\nabla^{2}_{ij} \Phi-\Phi h_{ik}h_{j}^{k}.
\end{split}
\end{equation*}
Direct calculation results in
\begin{equation*}
\nabla^{2}_{ij}\Phi=\Phi(-\frac{1}{H}H_{ij}+\frac{2H_i H_j}{H^2}).
\end{equation*}
By (\ref{Laplace}), we have
\begin{eqnarray*}
\Delta h_{ij}=H_{ij}-H h_{ik}h^{k}_{j}+h_{ij}|A|^2,
\end{eqnarray*}
so
\begin{equation*}
\nabla^{2}_{ij}\Phi=-H^{-2}\Delta h_{ij}-H^{-1} h_{ik}h^{k}_{j}
+H^{-2}|A|^2 h_{ij}+\frac{2H_i H_j}{H^3}.
\end{equation*}
Thus,
\begin{equation*}
\partial_{t}h_{ij}-H^{-2}\Delta h_{ij}=-H^{-2}|A|^2 h_{ij}-\frac{2}{H^3}H_i H_j.
\end{equation*}
Then
\begin{equation*}
\begin{split}
\partial_tH&=  \partial_t g^{ij} h_{ij}+ g^{ij} \partial_t h_{ij}\\
&= 2\Phi h^{ij}h_{ij} +g^{ij} \left( H^{-2}\Delta h_{ij}-H^{-2}|A|^2 h_{ij}-\frac{2}{H^3} \nabla_ iH \nabla_j H\right)\\
&=H^{-1} |A|^2+H^{-2} \Delta H-2 H^{-3} |\nabla H|^2.
\end{split}
\end{equation*}
Clearly,
\begin{eqnarray*}
\partial_{t}w= -\Phi  -  H^{-2} \nabla^i H\langle X,X_i\rangle_{L},
\end{eqnarray*}
using the Weingarten equation (\ref{Wein for}), we have
\begin{eqnarray*}
w_i=-h_{i}^{k}\langle X, X_k\rangle_{L},
\end{eqnarray*}
\begin{eqnarray*}
w_{ij}=-h_{i,j}^{k}\langle X,
X_k\rangle_{L}-h_{ij}+h_{i}^{k}h_{kj}\langle X, \nu\rangle_{L}
=-h_{ij, k}\langle X, X^k\rangle_{L}-h_{ij}+h_{i}^{k}h_{kj}\langle
X, \nu\rangle_{L}.
\end{eqnarray*}
Thus,
\begin{eqnarray*}
\Delta w= -H-\nabla^i H\langle X, X_i\rangle_{L}+|A|^2 \langle X,
\nu\rangle_{L}.
\end{eqnarray*}
 and
\begin{eqnarray*}
\partial_t w= H^{-2} \Delta w-H^{-2} w |A|^2.
\end{eqnarray*}
Hence
\begin{equation*}
\begin{split}
\frac{\partial \Psi }{\partial t}&= -  \frac{1}{ H^2} \frac{1}{w} \partial_tH -\frac{1}{ H} \frac{1}{w^2} \partial_tw\\
&= 2 H^{-5}w^{-1} |\nabla H|^2
 -H^{-4}w^{-1} \Delta H -  H^{-3}w^{-2} \Delta w.
\end{split}
\end{equation*}
In order to prove (\ref{div-for-1}), we calculate
$$\nabla_i \Psi =- H^{-2} w^{-1} \nabla_i H-  H^{-1} w^{-2} \nabla_i w.$$
and
\begin{equation*}
\begin{split}
\nabla^2_{ij}\Psi&= 2 H^{-3} w^{-1} \nabla_i H \nabla_j H +  H^{-2} w^{-2} \nabla_i H \nabla_j w-  H^{-2} w^{-1}\nabla^2_{ij} H \\
& \quad +  H^{-2} w^{-2} \nabla_i w \nabla_j H+ 2 H^{-1} w^{-3}
\nabla_i w \nabla_j w - H^{-1} w^{-2} \nabla^2_{ij} w.
\end{split}
\end{equation*}
Thus
\begin{equation*}
\begin{split}
 H^{-2} \Delta \Psi
&= 2 H^{-5} w^{-1} |\nabla H|^2+2 H^{-3} w^{-3} |\nabla w|^2+2 H^{-4} w^{-2} \nabla_i H \nabla^i w  \\
&- H^{-4} w^{-1}\Delta H- H^{-3} w^{-2} \Delta w.
\end{split}
\end{equation*}
So we have
\begin{equation*}
\begin{split}
&\mbox{div} ( H^{-2} \nabla \Psi)=- 2  H^{-3} \nabla_i \Psi \nabla^i H+  H^{-2} \Delta \Psi\\
&=4  H^{-5} w^{-1} |\nabla H|^2+ 4 H^{-4} w^{-2} \nabla_i w \nabla^i H +2 H^{-3} w^{-3} |\nabla w|^2\\
&- H^{-4} w^{-1}\Delta H- H^{-3} w^{-2} \Delta w.
\end{split}
\end{equation*}
and
\begin{equation*}
2H^{-1} w |\nabla \Psi|^2= 2 H^{-5} w^{-1} |\nabla H|^2 + 2 H^{-3}
w^{-3} |\nabla w|^2+4 H^{-4} w^{-2} \nabla_i H \nabla^i w.
\end{equation*}
As above, we have
\begin{equation*}
\frac{\partial \Psi }{\partial t}-\mbox{div} ( H^{-2} \nabla
\Psi)+2H^{-1} w |\nabla \Psi|^2=0.
\end{equation*}
The proof is finished.
\end{proof}

Now, we define the  rescaled flow by
\begin{equation*}
\widetilde{X}=X\Theta^{-1}.
\end{equation*}
Thus,
\begin{equation*}
\widetilde{u}=u\Theta^{-1},
\end{equation*}
\begin{equation*}
\widetilde{\varphi}=\varphi-\log\Theta,
\end{equation*}
and the rescaled Gauss curvature is given by
\begin{equation*}
\widetilde{H}=H\Theta.
\end{equation*}
Then, the rescaled scalar curvature equation takes the form
\begin{equation*}
\frac{\partial}{\partial
t}\widetilde{u}=-\frac{v}{\widetilde{H}}+\frac{1}{n}\widetilde{u}.
\end{equation*}
 Then $\widetilde{u}$ satisfies
\begin{equation}\label{Eq-re}
\left\{
\begin{aligned}
&\frac{\partial}{\partial
t}\widetilde{u}=-\frac{v}{\widetilde{H}}+\frac{1}{n}\widetilde{u}
\qquad && \mathrm{in}~
M^n\times(0,T)\\
&D_{\mu} \widetilde{u}=0  \qquad && \mathrm{on}~ \partial M^n\times(0,T)\\
&\widetilde{u}(\cdot,0)=\widetilde{u}_{0}  \qquad &&
\mathrm{in}~M^n.
\end{aligned}
\right.
\end{equation}

\begin{lemma}\label{res-01}
Let $X$ be a solution of (\ref{Eq}) and $\widetilde{X}=X
\Theta^{-1}$ be the rescaled solution. Then
\begin{equation*}
\begin{split}
&D \widetilde{u}=D u \Theta^{-1}, ~~~~D \widetilde{\varphi}=D \varphi,~~~~ \frac{\partial \widetilde{u}}{\partial t}=\frac{ \partial u}{\partial t} \Theta^{-1}+ \frac{1}{n}u\Theta^{-1},\\
&\widetilde{g}_{ij}=
\Theta^{-2}g_{ij},~~~~\widetilde{g}^{ij}=\Theta^{2}
g^{ij},~~~~\widetilde{h}_{ij}=h_{ij}\Theta^{-1}.
\end{split}
\end{equation*}
\end{lemma}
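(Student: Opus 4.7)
The plan is direct verification: each identity is a scaling calculation that follows from the explicit expressions in Lemma \ref{lemma2-1} together with the fact that $\Theta(t,c)=e^{-t/n+c}$ depends only on $t$, not on the spatial variables $\xi^{1},\ldots,\xi^{n}$ on $M^{n}$. I would handle the identities in the following order.

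First, the derivative identities. Since $\Theta$ is spatially constant, $D\widetilde{u}=D(u\Theta^{-1})=\Theta^{-1}Du$, and because $\widetilde{\varphi}=\varphi-\log\Theta$ with $\log\Theta$ also spatially constant, $D\widetilde{\varphi}=D\varphi$. For the time derivative, I differentiate $\widetilde{u}=u\Theta^{-1}$ in $t$ and use $\partial_{t}\Theta^{-1}=\frac{1}{n}\Theta^{-1}$, which yields $\partial_{t}\widetilde{u}=\Theta^{-1}\partial_{t}u+\frac{1}{n}u\Theta^{-1}$ immediately.

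Next, the metric scaling. The crucial auxiliary observation is that the scalar $v=\sqrt{1-u^{-2}\sigma^{ij}u_{i}u_{j}}=\sqrt{1-|D\varphi|^{2}}$ is invariant under the rescaling: since $D\widetilde{\varphi}=D\varphi$, one obtains $\widetilde{v}=v$. Substituting $\widetilde{u}=u\Theta^{-1}$ and $\widetilde{u}_{i}=\Theta^{-1}u_{i}$ into the formula $g_{ij}=u^{2}\sigma_{ij}-u_{i}u_{j}$ from Lemma \ref{lemma2-1}(ii) produces an overall factor of $\Theta^{-2}$, giving $\widetilde{g}_{ij}=\Theta^{-2}g_{ij}$; inverting the matrix yields $\widetilde{g}^{ij}=\Theta^{2}g^{ij}$.

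Finally, the second fundamental form. I use the expression $h_{ij}=-\frac{1}{v}\bigl(\frac{2}{u}u_{i}u_{j}-u_{ij}-u\sigma_{ij}\bigr)$ from Lemma \ref{lemma2-1}(iii) and substitute $\widetilde{u}=u\Theta^{-1}$, $\widetilde{u}_{i}=\Theta^{-1}u_{i}$, $\widetilde{u}_{ij}=\Theta^{-1}u_{ij}$ (where the last identity again uses that $\Theta$ has no spatial dependence, so covariant differentiation commutes with multiplication by $\Theta^{-1}$) together with $\widetilde{v}=v$. Each of the three terms inside the bracket acquires a single factor of $\Theta^{-1}$, producing $\widetilde{h}_{ij}=\Theta^{-1}h_{ij}$. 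There is no genuine obstacle; the only point requiring care is the invariance of $v$, after which every identity is a bookkeeping exercise tracking powers of $\Theta$.
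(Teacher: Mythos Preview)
Your proof is correct and follows exactly the approach the paper takes: the paper simply states ``These relations can be computed directly,'' and you have carried out those direct computations explicitly. Your observation that $v=\sqrt{1-|D\varphi|^{2}}$ is rescaling-invariant is the one point worth isolating, and you do so; otherwise this is precisely the bookkeeping the authors leave to the reader.
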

\begin{proof}
These relations can be computed directly.
\end{proof}

\begin{lemma} \label{lemma4-3}
Let $u$ be a solution to the parabolic system \eqref{Evo-1}, where
$\varphi(x,t)=\log u(x,t)$, and $\Sigma^n$ be the boundary of a
smooth, convex domain described as in Theorem \ref{main1.1}. Then
there exist some $\beta>0$ and some $C>0$ such that the rescaled
function $\widetilde{u}(x,t):=u(x,t) \Theta^{-1}(t)$ satisfies
\begin{equation}
 [D \widetilde{u}]_{\beta}+\left[\frac{\partial \widetilde{u}}{\partial t}\right]_{\beta}+[\widetilde{H}]_{\beta}\leq C(||u_{0}||_{C^{2+\gamma,1+\frac{\gamma}{2}}(M^n)}, n, \beta, M^n),
\end{equation}
where $[f]_{\beta}:=[f]_{x,\beta}+[f]_{t,\frac{\beta}{2}}$ is the
sum of the H\"{o}lder coefficients of $f$ in $M^n\times[0,T]$ with
respect to $x$ and $t$.
\end{lemma}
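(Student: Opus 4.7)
The plan is to adapt the strategy of Marquardt \cite{Mar} to the Lorentzian setting, using De Giorgi-Nash-Moser H\"{o}lder regularity for the divergence-form evolution equation \eqref{div-for-1} satisfied by $\Psi=1/(Hw)$. By Lemmas \ref{lemma3.1}, \ref{lemma3.2} and \ref{Gradient}, together with \eqref{w-ij} and Lemma \ref{lemma2-1}, the rescaled height $\widetilde u$ is bounded above and below by positive constants, $|D\varphi|$ is uniformly bounded strictly below $1$ so that $v=\sqrt{1-|D\varphi|^2}$ is bounded away from $0$ and $1$, and $\widetilde H$ is bounded above and below by positive constants. Writing $\widetilde w := w\Theta^{-1}$, the Lorentzian inner product $w=\langle X,\nu\rangle_L$ satisfies $0<c\le \widetilde w\le C$; consequently the scalar equation for $\widetilde\varphi=\varphi-\log\Theta$ is uniformly parabolic with smooth structure function.

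Next I would pass from \eqref{div-for-1} to the analogous divergence-form equation for the rescaled quantity $\widetilde\Psi$ via the rescaling rules of Lemma \ref{res-01}, thereby obtaining a uniformly parabolic equation whose principal coefficient is controlled above and below and whose quadratic lower-order term has the favorable sign allowing Moser-type iteration. The classical De Giorgi-Nash-Moser interior H\"{o}lder estimate then gives, for some $\beta\in(0,1)$ depending only on $n$ and the parabolicity constants,
\begin{equation*}
[\widetilde\Psi]_{\beta,K}\le C(K)\qquad \text{for every compact } K\subset\mathrm{int}(M^n)\times(0,T].
\end{equation*}
To extend this bound across $\partial M^n$, I would exploit the vanishing Neumann condition $D_\mu\widetilde u=0$ together with the convexity of $M^n$ and the homogeneity assumption $\mu(x)=\mu(rx)$: as in \cite[proof of Theorem 1]{Mar}, one reflects $\widetilde\Psi$ across $\partial M^n$ to produce an extension that solves a divergence-form equation of the same type in a slightly enlarged manifold, and then applies the interior estimate to the extension. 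This yields $[\widetilde\Psi]_\beta\le C$ on all of $M^n\times[0,T]$.

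With $\widetilde\Psi$ H\"{o}lder in hand, the algebraic relation between $\widetilde\Psi$, $\widetilde w$, $\widetilde H$ and the rescaling factors (see Lemma \ref{res-01}), together with the Lipschitz-in-$x$ and H\"{o}lder-in-$t$ regularity of $\widetilde w$ coming from the $C^1$ and $\dot\varphi$ estimates, gives $[\widetilde H]_\beta\le C$. The rescaled flow equation in \eqref{Eq-re}, $\partial_t\widetilde u=-v/\widetilde H+\widetilde u/n$, then immediately supplies $[\partial_t\widetilde u]_\beta\le C$. Finally, viewing \eqref{Evo-1} (written for $\widetilde\varphi$) as a uniformly parabolic quasilinear PDE with smooth structure function and a ``right-hand side'' now known to be H\"{o}lder continuous, standard Schauder-type boundary estimates for oblique-derivative problems upgrade this to $[D\widetilde u]_\beta\le C$. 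The main obstacle is the boundary step: the spacelike graph structure and the time-cone geometry of $\Sigma^n$ make a direct reflection delicate, and it is precisely the assumption $\mu(x)=\mu(rx)$ together with the convexity of $M^n$---which makes the boundary second fundamental form $h_{ij}^{\partial M^n}$ non-negative---that guarantees the reflected divergence-form equation remains uniformly parabolic with no uncontrollable boundary contributions, in direct parallel to the Euclidean argument of \cite{Mar}.
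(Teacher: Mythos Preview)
Your overall plan---use the divergence-form equation \eqref{div-for-1} and De Giorgi--Nash--Moser theory---matches the paper's, but the \emph{order} in which you attempt to extract the three H\"older seminorms creates a genuine circularity.

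The weak link is the passage from $[\widetilde\Psi]_\beta$ to $[\widetilde H]_\beta$. You claim that $\widetilde w:=w\Theta^{-1}=\langle X,\nu\rangle_L\,\Theta^{-1}$ is Lipschitz in $x$ ``from the $C^1$ and $\dot\varphi$ estimates''. But a direct computation with Lemma \ref{lemma2-1} gives $w=u/v$, so $\widetilde w=\widetilde u/v$ with $v=\sqrt{1-|D\varphi|^2}$. Spatial derivatives of $v$ involve $D^2\varphi$, which is \emph{not} controlled by the available $C^0$, $C^1$ and $\dot\varphi$ estimates; hence $\widetilde w$ is not known to be Lipschitz (or even H\"older) in $x$ at this stage. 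Since $\widetilde H=v/(\widetilde u\,\Psi)$, the missing regularity of $v$ blocks the deduction of $[\widetilde H]_\beta$ from $[\Psi]_\beta$ alone. The same obstruction reappears in your final step: invoking ``Schauder-type boundary estimates'' for the quasilinear equation requires H\"older coefficients, but those coefficients depend on $D\varphi$, whose H\"older norm is precisely what you are trying to establish.

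The paper breaks this circularity by reversing the order. Its Step~1 freezes $t$ and rewrites the flow equation as the elliptic Neumann problem
\[
-\mathrm{div}_\sigma\!\left(\frac{D\varphi}{\sqrt{1-|D\varphi|^2}}\right)=\frac{n}{\sqrt{1-|D\varphi|^2}}+\frac{\sqrt{1-|D\varphi|^2}}{\dot\varphi},
\]
whose right-hand side is uniformly bounded by the $\dot\varphi$ and gradient estimates. Elliptic De Giorgi--Nash theory (interior and oblique-boundary versions, as in \cite{La1}) then yields $[D\varphi]_{x,\beta}$ directly, without any prior knowledge of $[\widetilde H]_\beta$. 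An interpolation (Lemma~3.1 of \cite[Chap.~2]{La2}) supplies $[D\widetilde u]_{t,\beta/2}$. Only \emph{after} $[D\varphi]_\beta$ is in hand does the paper run the De Giorgi energy argument on $\Psi$ to get $[\partial_t\widetilde u]_\beta$, and then $[\widetilde H]_\beta$ follows algebraically from $\widetilde H=v/(\widetilde u\,\Psi)$, since $v$ is now H\"older. On the boundary, the paper does not reflect: because $D_\mu\varphi=0$, the boundary integrals in the weak formulation vanish, so the test-function estimate \eqref{imcf-hec-for-04}--\eqref{imcf-hec-for-06} holds up to $\partial M^n$ without modification. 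Your reflection idea is not wrong, but it is unnecessary here and does not repair the ordering problem above.
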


\begin{proof}
We divide our proof in  three steps\footnote{In the proof of Lemma
\ref{lemma4-3}, the constant $C$ may differ from each other.
However, we abuse the symbol $C$ for the purpose of convenience.}.

\textbf{Step 1:} We need to prove that
\begin{equation*}
  [D \widetilde{u}]_{x,\beta}+[D \widetilde{u}]_{t,\frac{\beta}{2}}\leq C(|| u_0||_{ C^{2+\gamma,1+\frac{\gamma}{2}}(M^n)}, n, \beta, M^n).
\end{equation*}
According to Lemmas \ref{lemma3.1}, \ref{lemma3.2} and
\ref{Gradient}, it follows that
$$|D \widetilde{u}|+\left|\frac{\partial \widetilde{u}}{\partial t}\right|\leq C(|| u_0||_{ C^{2+\gamma,1+\frac{\gamma}{2}}(M^n)}, M^n).$$
Then we can easily obtain the bound of $[\widetilde{u}]_{x,\beta}$
and $[\widetilde{u}]_{t,\frac{\beta}{2}}$ for any $0<\beta<1$. Lemma
3.1 in \cite[Chap. 2]{La2} implies that the bound for
$[D\widetilde{u}]_{t,\frac{\beta}{2}}$ follows from a bound for
$[\widetilde{u}]_{t,\frac{\beta}{2}}$ and
$[D\widetilde{u}]_{x,\beta}$. Hence it remains to bound  $[D
\varphi]_{x,\beta}$ since $D\widetilde{u}=\widetilde{u} D\varphi$.
For this,  fix $t$ and the equation (\ref{Evo-1}) can be rewritten
as an elliptic Neumann problem
 \begin{equation} \label{key1}
   -\mbox{div}_{\sigma}\left(\frac{D \varphi}{\sqrt{1-|D\varphi|^2}}\right)=\frac{n}{\sqrt{1-|D\varphi|^2}}+
    \frac{\sqrt{1-|D\varphi|^2}}{\dot{\varphi}}.
 \end{equation}
In fact, the equation (\ref{key1}) is of the form
$D_i(a^i(p))+a(x,t)=0$. Since $\dot{\varphi}$ and $|D\varphi|$ are
bounded, we know $a$ is a bounded function in $x$ and $t$. We define
$a^{ij}(p):=\frac{\partial a^i}{\partial p^j}$, the smallest and
largest eigenvalues of $a^{ij}$ are controlled due to the estimate
for $|D\varphi|$. By \cite[Chap. 3; Theorem 14.1; Chap. 10, \S
2]{La1}, we have the interior estimate and boundary estimate of
$[D\varphi]_{x,\beta}$. For a detail proof of this estimate, readers
are referred to \cite{Mar1}.

\textbf{Step 2:} The next thing to do is to show that
\begin{equation*}
  \left[\frac{\partial \widetilde{u}}{\partial t}\right]_{x,\beta}+\left[\frac{\partial \widetilde{u}}{\partial t}\right]_{t,\frac{\beta}{2}}\leq
  C(||u_0||_{ C^{2+\gamma,1+\frac{\gamma}{2}}(M^n)}, n, \beta,
  M^n).
\end{equation*}
  As $\frac{\partial}{\partial t}\widetilde{u}=\widetilde{u}\left(\frac{v}{\widetilde{u}\widetilde{H}}+\frac{1}{n}\right)$, it is enough to bound
  $\left[\frac{v}{\widetilde{u} \widetilde{H}}\right]_{\beta}$.
Set $\widetilde{w}(t):=\frac{v}{\widetilde{u} \widetilde{H}}=
\Psi$. Let $\widetilde{\nabla}$ be the Levi-Civita connection of
$\widetilde{M}_{t}:=\widetilde{X}(M^n,t)$ w.r.t. the metric
$\widetilde{g}$. Combining (\ref{div-for-1}) with Lemma
\ref{res-01}, we get
\begin{equation}\label{div-form-02}
\frac{\partial \widetilde{w}}{\partial t}
=\mbox{div}_{\widetilde{g}} ( \widetilde{H}^{-2} \widetilde{\nabla}
\widetilde{w})-2 \widetilde{H}^{-2} \widetilde{w}^{-1}
|\widetilde{\nabla} \widetilde{w}|^2_{\widetilde{g}}.
\end{equation}

The weak formulation of (\ref{div-form-02}) is
\begin{equation}\label{div-form-03}
\begin{split}
\int_{t_0}^{t_1} \int_{\widetilde{M}_t}  \frac{\partial
\widetilde{w} }{\partial t}  \eta d\mu_t dt & =\int_{t_0}^{t_1}
\int_{\widetilde{M}_t} \mbox{div}_{\widetilde{g}} (
\widetilde{H}^{-2} \widetilde{\nabla} \widetilde{w}) \eta -2
\widetilde{H}^{-2} \widetilde{w}^{-1} |\widetilde{\nabla}
\widetilde{w}|^2_{\widetilde{g}} \eta d\mu_t dt.
\end{split}
\end{equation}
Since $\nabla_{\mu} \widetilde{\varphi}=0$, the boundary integrals
all vanish, the interior and boundary estimates are basically the
same. We define the test function $\eta:=\xi^2 \widetilde{w}$, where
$\xi$ is a smooth function with values in $[0,1]$ and is supported
in a small parabolic neighborhood. Then
\begin{equation}\label{imcf-hec-for-02}
\begin{split}
\int_{t_0}^{t_1} \int_{\widetilde{M}_t}  \frac{\partial
\widetilde{w} }{\partial t}  \xi^2 \widetilde{w} d\mu_t dt=
\frac{1}{2}||\widetilde{w} \xi||_{2,\widetilde{M}_t}^2
\Big{|}_{t_0}^{t_1}-\int_{t_0}^{t_1} \int_{\widetilde{M}_t}  \xi
\dot{\xi} \widetilde{w}^2 d\mu_t dt.
\end{split}
\end{equation}
Using the divergence theorem and Young's inequality, we can obtain
\begin{equation}\label{imcf-hec-for-03}
\begin{split}
&\int_{t_0}^{t_1} \int_{\widetilde{M}_t}  \mbox{div}_{\widetilde{g}} ( \widetilde{H}^{-2} \widetilde{\nabla} \widetilde{w})  \xi^2 \widetilde{w}  d\mu_tdt\\
&=-\int_{t_0}^{t_1} \int_{\widetilde{M}_t}   \widetilde{H}^{-2}
\xi^2\widetilde{\nabla}_i \widetilde{w}
\widetilde{\nabla}^i\widetilde{w}  d\mu_tdt
-2\int_{t_0}^{t_1} \int_{\widetilde{M}_t} \widetilde{H}^{-2} \xi \widetilde{w}\widetilde{\nabla}_i\widetilde{w} \widetilde{\nabla}^i \xi d\mu_tdt\\
&\leq\int_{t_0}^{t_1} \int_{\widetilde{M}_t}   \widetilde{H}^{-2}
|\widetilde{\nabla} \xi|^2\widetilde{w}^2  d\mu_tdt.
\end{split}
\end{equation}
Combing (\ref{imcf-hec-for-02}) and (\ref{imcf-hec-for-03}), we have
 \begin{equation}\label{imcf-hec-for-04}
\begin{split}
&\frac{1}{2}||\widetilde{w} \xi||_{2,\widetilde{M}_t}^2
\Big{|}_{t_0}^{t_1}
+2\int_{t_0}^{t_1} \int_{\widetilde{M}_t} \widetilde{H}^{-2} |\widetilde{\nabla} \widetilde{w}|^2  \xi^2   d\mu_tdt\\
& \leq \int_{t_0}^{t_1} \int_{\widetilde{M}_t}  \xi |\dot{\xi}|
\widetilde{w}^2 d\mu_t dt +\int_{t_0}^{t_1} \int_{\widetilde{M}_t}
\widetilde{H}^{-2} |\widetilde{\nabla} \xi|^2\widetilde{w}^2
d\mu_tdt,
\end{split}
\end{equation}
 which implies that
 \begin{equation}\label{imcf-hec-for-06}
\begin{split}
&\frac{1}{2}||\widetilde{w} \xi||_{2,\widetilde{M}_t}^2
\Big{|}_{t_0}^{t_1}
+\frac{2}{\max(\widetilde{H}^{2}) }\int_{t_0}^{t_1} \int_{\widetilde{M}_t} |\widetilde{\nabla} \widetilde{w}|^2  \xi^2   d\mu_tdt\\
& \leq \left(1+ \frac{1}{\min( \widetilde{H}^{2})}\right)
\int_{t_0}^{t_1} \int_{\widetilde{M}_t}  \widetilde{w}^2 (\xi
|\dot{\xi}| +|\widetilde{\nabla} \xi|^2)d\mu_t dt.
\end{split}
\end{equation}
This means that $\widetilde{w}$ belong to the De Giorgi class of
functions in $M^n \times [0,T)$. Similar to the arguments in
\cite[Chap. 5, \S 1 and \S 7]{La2},  there exist  constants $\beta$
and $C$ such that
$$[\widetilde{w}]_{\beta}\leq C ||\widetilde{w}||_{L^{\infty}(M^n \times [0,T))}\leq  C(|| u_0||_{ C^{2+\gamma,1+\frac{\gamma}{2}}(M^n)}, n, \beta, M^n).$$

\textbf{Step 3:} Finally, we have to show that
\begin{equation*}
  [ \widetilde{H}]_{x,\beta}+[\widetilde{H}]_{t,\frac{\beta}{2}}\leq C(|| u_0||_{ C^{2+\gamma,1+\frac{\gamma}{2}}(M^n)}, n, \beta, M^n).
\end{equation*}
This follows from the fact that
$$\widetilde{H}=\frac{\sqrt{1-|D\varphi|^2}}{\widetilde{u} \widetilde{w}}$$
together with the estimates for $\widetilde{u}$, $\widetilde{w}$,
$D\varphi$.
\end{proof}

Then we can obtain the following higher-order estimates:
\begin{lemma}
Let $u$ be a solution to the parabolic system \eqref{Evo-1}, where
$\varphi(x,t)=\log u(x,t)$, and $\Sigma^n$ be the boundary of a
smooth, convex domain described as in Theorem \ref{main1.1}. Then
for any $t_0\in (0,T)$ there exist some $\beta>0$ and some $C>0$
such that
\begin{equation}\label{imfcone-holder-01}
||\widetilde{u}||_{C^{2+\beta,1+\frac{\beta}{2}}(M^n\times
[0,T])}\leq C(|| u_0||_{ C^{2+\gamma, 1+\frac{\gamma}{2}}(M^n)}, n,
\beta, M^n)
\end{equation}
and for all $k\in \mathbb{N}$,
\begin{equation}\label{imfcone-holder-02}
||\widetilde{u}||_{C^{2k+\beta,k+\frac{\beta}{2}}(M^n\times
[t_0,T])}\leq C(||u_0(\cdot,
t_0)||_{C^{2k+\beta,k+\frac{\beta}{2}}(M^n)}, n, \beta, M^n).
\end{equation}
\end{lemma}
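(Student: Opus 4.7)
The plan is to interpret the rescaled equation \eqref{Eq-re} as a quasilinear uniformly parabolic scalar PDE with a linear oblique boundary condition and a bound on $[D\widetilde{u}]_\beta$ already in hand from Lemma \ref{lemma4-3}. Then I would freeze the coefficients so that the equation becomes linear parabolic with H\"older continuous data, apply parabolic Schauder theory for oblique derivative problems to obtain \eqref{imfcone-holder-01}, and bootstrap to obtain \eqref{imfcone-holder-02}.

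Concretely, using the expression for $\widetilde{H}$ from Lemma \ref{lemma2-1}, the evolution equation in \eqref{Eq-re} can be written in non-divergence form
\[
\partial_t \widetilde{u} \;-\; a^{ij}\bigl(x,\widetilde{u},D\widetilde{u}\bigr)\, D_{ij}\widetilde{u} \;=\; b\bigl(x,\widetilde{u},D\widetilde{u}\bigr),
\]
with $a^{ij}$ and $b$ smooth functions of their arguments. Uniform parabolicity of $a^{ij}$ follows from the strict inequality $|D\varphi|<1$ of Lemma \ref{Gradient} together with the two-sided bound on $\widetilde{u}$ from Lemma \ref{lemma3.1}. The boundary condition $D_\mu \widetilde{u}=0$ is linear oblique, with uniform obliqueness guaranteed by $\mu(x)\in T_xM^n\setminus T_x\partial M^n$ and $\mu(x)=\mu(rx)$. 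The initial datum is $C^{2+\gamma}$ with $\gamma>\beta$, which supplies the necessary compatibility at $t=0$.

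For \eqref{imfcone-holder-01}, I would substitute the H\"older continuous functions $D\widetilde{u}$ and $\widetilde{u}$ controlled by Lemmas \ref{lemma3.1} and \ref{lemma4-3} into the arguments of $a^{ij}$ and $b$; this turns the problem into a linear parabolic equation with $C^{\beta,\beta/2}$ coefficients and a smooth oblique boundary operator. The parabolic Schauder estimates for oblique derivative problems (see, e.g., \cite[Chap.~4]{La2} and the boundary version in \cite[Chap.~10]{La1}, or the formulation adapted to this geometric setting in \cite{Mar1}) then yield the bound
\[
\|\widetilde{u}\|_{C^{2+\beta,1+\beta/2}(M^n\times[0,T])} \leq C.
\]
For the higher-order estimates \eqref{imfcone-holder-02} on $[t_0,T]$ with $t_0>0$, I would bootstrap: differentiating the equation once spatially or temporally produces a linear parabolic equation for $D_k\widetilde{u}$ (respectively $\partial_t \widetilde{u}$) whose coefficients are now of the class just established, and differentiating the oblique boundary condition tangentially along $\partial M^n$ gives a compatible linear oblique condition for the derivative. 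Applying Schauder estimates once more raises regularity by one derivative; iterating $k$ times and using parabolic smoothing on $[t_0,T]$ to remove the loss due to finite initial regularity yields the required $C^{2k+\beta,k+\beta/2}$ bound.

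The main obstacle is the boundary part of the argument, specifically tracking the oblique derivative condition under differentiation: one must control the commutators $[D_\mu, D_j]$ along $\partial M^n$ and verify that the induced boundary operator at each bootstrap step remains linear and uniformly oblique. This is where the convexity of $M^n \subset \mathscr{H}^n(1)$ and the structural assumption $\mu(x)=\mu(rx)$ enter crucially, as already exploited in the gradient estimate of Lemma \ref{Gradient}; otherwise the analytical steps reduce to a standard application of linear parabolic Schauder theory of the type used in \cite{Mar1}.
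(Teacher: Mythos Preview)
Your overall strategy---freeze coefficients, apply linear Schauder theory for oblique problems, then bootstrap---matches the paper's, and your treatment of the boundary condition and the bootstrap is fine. However there is a genuine gap in the first step.

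You claim that \eqref{Eq-re} can be written as
\[
\partial_t\widetilde{u}-a^{ij}(x,\widetilde{u},D\widetilde{u})\,D_{ij}\widetilde{u}=b(x,\widetilde{u},D\widetilde{u}),
\]
and that plugging in the H\"older control on $\widetilde{u}$ and $D\widetilde{u}$ from Lemmas \ref{lemma3.1} and \ref{lemma4-3} makes the coefficients $C^{\beta,\beta/2}$. But the scalar equation is \emph{fully nonlinear}, not quasilinear: from Lemma \ref{lemma2-1},
\[
\partial_t\widetilde{u}=-\frac{v}{\widetilde{H}}+\frac{1}{n}\widetilde{u},\qquad \widetilde{u}\,v\,\widetilde{H}=n+\Bigl(\sigma^{ij}+\tfrac{\varphi^i\varphi^j}{v^2}\Bigr)\varphi_{ij},
\]
so the second derivatives sit in the denominator and there is no expression for the principal part with $a^{ij}$ depending only on $(x,\widetilde{u},D\widetilde{u})$. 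Freezing only $\widetilde{u}$ and $D\widetilde{u}$ therefore does \emph{not} yield a linear equation with H\"older coefficients, and Schauder theory cannot be invoked at this stage on those inputs alone.

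What makes the argument go through is precisely the part of Lemma \ref{lemma4-3} you do not use: the H\"older bound on $\widetilde{H}$ (equivalently on $\partial_t\widetilde{u}$). The paper exploits this by rewriting the equation as
\[
\frac{\partial\widetilde{u}}{\partial t}=\frac{\Delta_{\widetilde{g}}\widetilde{u}}{\widetilde{H}^2}-\frac{2v}{\widetilde{H}}+\frac{1}{n}\widetilde{u}+\frac{n-|\widetilde{\nabla}\widetilde{u}|^2}{\widetilde{u}\,\widetilde{H}^2},
\]
and then treating $\widetilde{H}^{-2}$ as a \emph{known} $C^{\beta,\beta/2}$ coefficient. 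Now the equation is genuinely linear in $\widetilde{u}$ with H\"older data, and the linear oblique Schauder estimate (the paper cites \cite[Chap.~4]{Lieb}) gives \eqref{imfcone-holder-01}. Once you add the input $[\widetilde{H}]_\beta$ and this rewriting, your proposal and the paper's proof coincide; the higher-order step, differentiating and treating the equations for $D_i\widetilde{\varphi}$ and $\partial_t\widetilde{\varphi}$ as linear, is the same in both.
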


\begin{proof}
By Lemma \ref{lemma2-1}, we have
$$uvH=n+(\sigma^{ij}+\frac{\varphi^{i}\varphi^{j}}{v^{2}})\varphi_{ij}=n+u^2 \Delta_g \varphi.$$
Since
$$u^2 \Delta_g \varphi=\widetilde{u}^2 \Delta_{\widetilde{g}} \varphi=
-| \widetilde{\nabla} \widetilde{u}|^2+ \widetilde{u}
\Delta_{\widetilde{g}} \widetilde{u},$$ then
\begin{equation*}
\begin{split}
\frac{\partial \widetilde{u}}{\partial t}&=\frac{ \partial u}{\partial t} \Theta^{-1}+\frac{1}{n} \widetilde{u}\\
&=\frac{uvH}{u H^2} \Theta^{-1} - \frac{2v}{H} \Theta^{-1}+ \frac{1}{n}\widetilde{u}\\
&=\frac{\Delta_{\widetilde{g}} \widetilde{u}}{\widetilde{H}^2}-
\frac{2v}{\widetilde{H}} + \frac{1}{n}\widetilde{u} +\frac{n-|
\widetilde{\nabla} \widetilde{u}|^2}{\widetilde{u} \widetilde{H}^2},
\end{split}
\end{equation*}
which is  a uniformly parabolic equation with H\"{o}lder continuous
coefficients. Therefore, the linear theory (see \cite[Chap.
4]{Lieb}) yields the inequality (\ref{imfcone-holder-01}).

Set $\widetilde{\varphi}=\log \widetilde{u}$, and then the rescaled
version of the evolution equation in (\ref{Eq-re}) takes the form
\begin{equation*}
  \frac{\partial \widetilde{\varphi}}{\partial t}=- \frac{ v^2}{ \left[n+\left(\sigma^{ij}+\frac{\widetilde{\varphi}^i\widetilde{\varphi}^j}{v^2}\right) \widetilde{\varphi}_{ij}\right]}+\frac{1}{n},
\end{equation*}
where $v=\sqrt{1-|D \widetilde{\varphi}|^2}$. According to the
$C^{2+\beta,1+\frac{\beta}{2}}$-estimate of $\widetilde{u}$ (see
Lemma \ref{lemma4-3}), we can treat the equations for $\frac{
\partial\widetilde{\varphi}}{\partial t}$ and $D_i
\widetilde{\varphi}$ as second-order linear uniformly parabolic PDEs
on $M^n\times [t_0,T]$. At the initial time $t_0$, all compatibility
conditions are satisfied and the initial function $u(\cdot,t_0)$ is
smooth. We can obtain a $C^{3+\beta, \frac{3+\beta}{2}}$-estimate
for $D_i \widetilde{\varphi}$ and a $C^{2+\beta,
\frac{2+\beta}{2}}$-estimate for $\frac{
\partial\widetilde{\varphi}}{\partial t}$ (the estimates are
independent of $T$) by Theorem 4.3 and Exercise 4.5 in \cite[Chap.
4]{Lieb}. Higher regularity can be proven by induction over $k$.
\end{proof}

\begin{theorem} \label{key-2}
Under the hypothesis of Theorem \ref{main1.1}, we conclude
\begin{equation*}
T^{*}=+\infty.
\end{equation*}
\end{theorem}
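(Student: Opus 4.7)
The plan is a standard contradiction argument exploiting the uniform a priori estimates already established. Suppose, on the contrary, that $T^{*}<\infty$. I want to show that the solution can be extended past $T^{*}$, which will contradict the maximality of $T^{*}$.

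First, I would use the uniform higher-order estimate (\ref{imfcone-holder-01}) together with (\ref{imfcone-holder-02}) for the rescaled function $\widetilde{u}$: since the bounds on $\|\widetilde{u}\|_{C^{2+\beta,1+\beta/2}(M^{n}\times[0,T])}$ depend only on $\|u_{0}\|_{C^{2+\gamma,1+\gamma/2}(M^{n})}$, $n$, $\beta$, $M^{n}$ and not on $T<T^{*}$, I can pass to the limit $T\nearrow T^{*}$ and conclude that $\widetilde{u}$ extends continuously in $C^{2+\beta,1+\beta/2}(M^{n}\times[0,T^{*}])$, and in fact smoothly up to $t=T^{*}$ on any subinterval $[t_{0},T^{*}]$ with $t_{0}>0$ thanks to (\ref{imfcone-holder-02}). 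Since $\Theta(t)=e^{-t/n+c}$ is bounded and bounded away from zero on $[0,T^{*}]$, the original function $u=\widetilde{u}\,\Theta$ has the same regularity, so that $u(\cdot,T^{*})$ is a well-defined function in $C^{\infty}(M^{n})\cap C^{2+\beta,1+\beta/2}(M^{n}\times[0,T^{*}])$.

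Next I need to verify that $u(\cdot,T^{*})$ still satisfies the hypotheses required to start the short-time existence from time $T^{*}$. The gradient estimate of Lemma \ref{Gradient} gives $|D\varphi(\cdot,T^{*})|\leq\sup_{M^{n}}|D\varphi(\cdot,0)|<1$, so $M^{n}_{T^{*}}:=\{(x,u(x,T^{*}))\mid x\in M^{n}\}$ is a spacelike graph and $v=\sqrt{1-|D\varphi|^{2}}$ stays bounded away from zero. The estimate (\ref{w-ij}) gives $H\,\Theta\geq c_{3}>0$ at time $T^{*}$, hence $H(\cdot,T^{*})>0$ and the strict mean convexity is preserved; equivalently, the matrix $n+(\sigma^{ij}+\varphi^{i}\varphi^{j}/v^{2})\varphi_{ij}$ stays positive on $M^{n}$ so that the scalar PDE in (\ref{Evo-1}) remains uniformly parabolic up to $T^{*}$. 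The zero Neumann condition $D_{\mu}\varphi=0$ on $\partial M^{n}$ also persists in the limit by continuity.

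Therefore the assumptions of the short-time existence lemma for (\ref{Evo-1}) are satisfied at the initial datum $u(\cdot,T^{*})$, and there exists $\delta>0$ and a solution $\widetilde{u}\in C^{2+\beta,1+\beta/2}(M^{n}\times[T^{*},T^{*}+\delta])\cap C^{\infty}(M^{n}\times(T^{*},T^{*}+\delta])$ of the same parabolic system starting at $u(\cdot,T^{*})$. Gluing this to the original solution (compatibility at $t=T^{*}$ is immediate since $u(\cdot,T^{*})$ is smooth and both solutions satisfy the same equation and boundary condition) yields a solution of (\ref{Evo-1}) on $M^{n}\times[0,T^{*}+\delta)$, which contradicts the definition of $T^{*}$. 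Hence $T^{*}=+\infty$. The only place where anything delicate happens is making sure the uniform parabolicity constants do not degenerate as $t\nearrow T^{*}$; this is ensured precisely by the gradient bound $|D\varphi|<1$ strictly and by $H\Theta\geq c_{3}>0$, both of which are established independently of $T$ in Section \ref{se4}.
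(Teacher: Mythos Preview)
Your proposal is correct and follows the standard continuation argument that the paper itself defers to \cite[Lemma 8]{Mar}: assume $T^{*}<\infty$, use the time-independent a priori estimates (\ref{imfcone-holder-01})--(\ref{imfcone-holder-02}) together with Lemma \ref{Gradient} and (\ref{w-ij}) to obtain a smooth, strictly mean convex, spacelike graph at $t=T^{*}$, then restart the flow via the short-time existence lemma to contradict maximality. This is precisely the approach the paper intends, only omitted there for brevity.
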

\begin{proof}
The proof of this result is quite similar to the corresponding
argument in \cite[Lemma 8]{Mar} and so is omitted.
\end{proof}

\section{Convergence of the rescaled flow} \label{se6}

We know that after the long-time existence of the flow has been
obtained (see Theorem \ref{key-2}), the rescaled version of the
system (\ref{Evo-1}) satisfies
\begin{equation}
\left\{
\begin{aligned}
&\frac{\partial}{\partial
t}\widetilde{\varphi}=\widetilde{Q}(D\widetilde{\varphi},
D^2\widetilde{\varphi})  \qquad &&\mathrm{in}~
M^n\times(0,\infty)\\
&D_{\mu} \widetilde{\varphi}=0  \qquad &&\mathrm{on}~ \partial M^n\times(0,\infty)\\
&\widetilde{\varphi}(\cdot,0)=\widetilde{\varphi}_{0} \qquad
&&\mathrm{in}~M^n,
\end{aligned}
\right.
\end{equation}
where
$$\widetilde{Q}(D\widetilde{\varphi},
D^2\widetilde{\varphi}):=- \frac{ v^2}{
\left[n+\left(\sigma^{ij}+\frac{\widetilde{\varphi}^i\widetilde{\varphi}^j}{v^2}\right)
\widetilde{\varphi}_{ij}\right]}+\frac{1}{n}$$ and
$\widetilde{\varphi}=\log \widetilde{u}$. Similar to what has been
done in the $C^1$ estimate (see Lemma \ref{Gradient}), we can deduce
a decay estimate of $\widetilde{u}(\cdot, t)$ as follows.

\begin{lemma} \label{lemma5-1}
Let $u$ be a solution of \eqref{Eq-}, then we have
\begin{equation}\label{Gra-est1}
|D\widetilde{u}(x, t)|\leq\lambda\sup_{M^n}|D\widetilde{u}(\cdot,
0)|,
\end{equation}
where $\lambda$ is a positive constant depending only on $c_{1}$,
$c_{2}$.
\end{lemma}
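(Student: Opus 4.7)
The plan is to deduce this estimate as an algebraic corollary of the $C^0$ bound in Lemma \ref{lemma3.1} and the gradient estimate for $\varphi$ already established in Lemma \ref{Gradient}, rather than rerunning the maximum principle argument in the rescaled variables (which would reduce to exactly the same computation anyway).

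The key observation is that since the rescaling factor $\Theta(t,c) = e^{-t/n+c}$ depends only on $t$, we have $\widetilde{\varphi}(x,t) = \varphi(x,t) - \log \Theta(t)$, so $D\widetilde{\varphi}(x,t) = D\varphi(x,t)$ (this is also recorded in Lemma \ref{res-01}). Therefore Lemma \ref{Gradient} immediately gives the pointwise bound $|D\widetilde{\varphi}(x,t)| = |D\varphi(x,t)| \leq \sup_{M^n} |D\varphi(\cdot,0)|$. Combining this with the identity $D\widetilde{u} = \widetilde{u}\, D\widetilde{\varphi}$ and the upper $C^0$ bound $\widetilde{u}(x,t) \leq c_2$ from Lemma \ref{lemma3.1} yields
\begin{equation*}
|D\widetilde{u}(x,t)| = \widetilde{u}(x,t)\,|D\varphi(x,t)| \leq c_2 \sup_{M^n} |D\varphi(\cdot,0)|.
\end{equation*}

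To convert the right-hand side into the norm of $D\widetilde{u}(\cdot,0)$, I use the lower $C^0$ bound at time zero: for every $x\in M^n$ one has $\widetilde{u}(x,0) \geq c_1$, hence
\begin{equation*}
|D\widetilde{u}(x,0)| = \widetilde{u}(x,0)\,|D\varphi(x,0)| \geq c_1 \,|D\varphi(x,0)|.
\end{equation*}
Taking the supremum over $x$ gives $\sup_{M^n}|D\varphi(\cdot,0)| \leq c_1^{-1}\sup_{M^n}|D\widetilde{u}(\cdot,0)|$, and plugging this back in produces the desired inequality \eqref{Gra-est1} with $\lambda = c_2/c_1$, which by construction depends only on $c_1$ and $c_2$.

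There is no real obstacle: once one recognizes that $D\widetilde{\varphi}$ coincides with $D\varphi$, the Lemma is essentially a bookkeeping exercise combining two estimates already in hand. The only point requiring care is the pointwise, rather than global, use of the lower bound $\widetilde{u}(\cdot,0) \geq c_1$ when relating $\sup|D\widetilde{u}(\cdot,0)|$ to $\sup|D\varphi(\cdot,0)|$; done pointwise first and then taking sup, this avoids any loss coming from the fact that $\widetilde{u}$ and $|D\varphi|$ need not attain their suprema at the same point.
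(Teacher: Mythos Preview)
Your proof is correct and follows essentially the same route as the paper: the paper reruns the maximum-principle argument of Lemma~\ref{Gradient} for $\widetilde{\psi}=\tfrac{1}{2}|D\widetilde{\varphi}|^{2}$ in the rescaled equation and then says ``the conclusion (\ref{Gra-est1}) follows,'' whereas you observe that $D\widetilde{\varphi}=D\varphi$ makes this repetition unnecessary and supply the explicit $C^{0}$-based conversion giving $\lambda=c_{2}/c_{1}$. The content is the same; your presentation is simply more economical and makes the final algebraic step transparent.
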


\begin{proof}
Set $\widetilde{\psi}=\frac{|D \widetilde{\varphi}|^2}{2}$. Similar
to that in Lemma \ref{Gradient}, we can obtain
\begin{equation}\label{gra-}
\frac{\partial \widetilde{\psi}}{\partial t}=\widetilde{Q}^{ij}
\widetilde{\psi}_{ij}+\widetilde{Q}^k \widetilde{\psi}_k
-\widetilde{Q}^{ij}(\widetilde{\varphi}_i
\widetilde{\varphi}_j-\sigma_{ij}|D\widetilde{\varphi}|^2)-\widetilde{Q}^{ij}\widetilde{\varphi}_{mi}
\widetilde{\varphi}^{m}_{j},
\end{equation}
 with the boundary condition
\begin{equation*}
\begin{aligned}
D_ \mu \widetilde{\psi}\leq 0.
\end{aligned}
\end{equation*}
So we have
\begin{equation*}
\left\{
\begin{aligned}
&\frac{\partial \widetilde{\psi}}{\partial s}\leq
\widetilde{Q}^{ij}\widetilde{\psi}_{ij}+\widetilde{Q}^k\widetilde{\psi}_k
\quad &&\mathrm{in}~
M^n\times(0,\infty)\\
&D_\mu \widetilde{\psi} \leq 0 \quad &&\mathrm{on}~\partial M^n\times(0,\infty)\\
&\psi(\cdot,0)=\frac{|D\widetilde{\varphi}(\cdot,0)|^2}{2}
\quad&&\mathrm{in}~M^n.
\end{aligned}\right.
\end{equation*}
Using the maximum principle and Hopf's lemma, we can get the
gradient estimate of $\widetilde{\varphi}$, and then the conclusion
(\ref{Gra-est1}) follows.
\end{proof}

\begin{lemma}\label{rescaled flow}
Let $u$ be a solution of the flow \eqref{Eq-}. Then,
\begin{equation*}
\widetilde{u}(\cdot, t)
\end{equation*}
converges to a real number as $t\rightarrow +\infty$.
\end{lemma}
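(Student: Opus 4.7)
The plan is to exploit the fact that every constant function is a stationary solution of the rescaled flow \eqref{Eq-re}, and then use the parabolic comparison and strong maximum principles to squeeze the spatial oscillation of $\widetilde u(\cdot,t)$ to zero. A direct check shows that for any constant $r>0$, the function $\widetilde u\equiv r$ gives $v=1$ and $\widetilde H=n/r$, so the right-hand side $-v/\widetilde H+\widetilde u/n$ of \eqref{Eq-re} vanishes, and the Neumann condition $D_\mu\widetilde u=0$ is trivially satisfied. Set $M(s):=\max_{M^n}\widetilde u(\cdot,s)$ and $m(s):=\min_{M^n}\widetilde u(\cdot,s)$; then $\widetilde u\equiv M(s)$ is a stationary solution of the Neumann problem dominating $\widetilde u(\cdot,s)$ at time $s$. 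Since the bound $|D\widetilde\varphi|<1$ from Lemma \ref{lemma5-1} keeps \eqref{Eq-re} uniformly parabolic, the parabolic comparison principle for Neumann problems yields $\widetilde u(\cdot,\tau)\leq M(s)$ for every $\tau\geq s$, so $M(t)$ is non-increasing; by the symmetric argument with the lower barrier $m(s)$, $m(t)$ is non-decreasing.

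Combined with the $C^0$ bound $c_1\leq m(t)\leq M(t)\leq c_2$ from Lemma \ref{lemma3.1}, both limits $M_\infty:=\lim_{t\to\infty}M(t)$ and $m_\infty:=\lim_{t\to\infty}m(t)$ exist in $[c_1,c_2]$. To finish, I must show $M_\infty=m_\infty$. Given any $t_n\to\infty$, the uniform higher-order estimates \eqref{imfcone-holder-02} together with Arzel\`a--Ascoli produce a subsequence (still denoted $t_n$) along which the time-shifted functions $\widetilde u(\cdot,t_n+s)$ converge in $C^\infty(M^n\times[0,1])$ to a smooth $\widetilde u_\infty$ solving the same Neumann problem \eqref{Eq-re}. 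By construction $\max_{M^n}\widetilde u_\infty(\cdot,s)\equiv M_\infty$ for every $s\in[0,1]$. Applying the comparison argument to $\widetilde u_\infty$ against the stationary barrier $M_\infty$ gives $\widetilde u_\infty\leq M_\infty$ on $M^n\times[0,1]$ with equality attained. The parabolic strong maximum principle, with Hopf's lemma handling any extrema on $\partial M^n$ via the zero Neumann condition, then forces $\widetilde u_\infty\equiv M_\infty$, whence $m_\infty=\min_{M^n}\widetilde u_\infty(\cdot,1)=M_\infty$. Putting $r_\infty:=M_\infty=m_\infty$, the sandwich $m(t)\leq\widetilde u(x,t)\leq M(t)$ yields uniform convergence $\widetilde u(\cdot,t)\to r_\infty$ as $t\to\infty$.

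The main obstacle is the last step: verifying that any subsequential limit is spatially constant. The delicate point is applying the strong maximum principle to the quasilinear rescaled equation \emph{together with} the Neumann boundary condition. One linearizes along $\widetilde u_\infty$, so that the resulting operator is uniformly elliptic (inherited from $|D\widetilde u_\infty|<1$ via Lemma \ref{lemma5-1}), and then invokes a Hopf-type lemma at boundary extrema, where the vanishing normal derivative on $\partial M^n$ replaces the strict positivity usually appearing in the Hopf conclusion. The convexity of $M^n$ plays the analogous role here that it did in controlling boundary terms during the gradient estimate in Lemma \ref{lemma5-1}.
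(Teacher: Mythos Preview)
Your argument is correct, and it follows a genuinely different route from the paper's proof.

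The paper proceeds by computing the exact area evolution $\mathcal{H}^n(M^n_t)=\mathcal{H}^n(M^n_0)e^{-t}$ via the first variation formula, so that the \emph{rescaled} area $\mathcal{H}^n(\widetilde M_t)$ is constant in $t$; this, combined with the uniform higher-order bounds \eqref{imfcone-holder-02}, the gradient bound of Lemma~\ref{lemma5-1}, and Arzel\`a--Ascoli, is used to conclude convergence to a constant and to read off the explicit bounds \eqref{radius} on $r_\infty$. Your approach instead exploits that every constant is a stationary solution of the autonomous rescaled problem \eqref{Eq-re}: parabolic comparison against the constant barriers $M(s)$ and $m(s)$ makes the spatial maximum non-increasing and the minimum non-decreasing, and then a strong maximum principle/Hopf argument on any subsequential $C^\infty$ limit forces the oscillation to vanish. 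Your method is arguably more self-contained---it does not use the area identity at all and yields full-sequence convergence directly---while the paper's area computation has the advantage of producing the quantitative range \eqref{radius} for $r_\infty$, which your argument does not immediately give. Both approaches rely on the same higher-order estimates \eqref{imfcone-holder-02} to extract smooth subsequential limits.
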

\begin{proof}
Set $f(t):= \mathcal{H}^n(M_{t}^{n})$, which, as before, represents
the $n$-dimensional Hausdorff measure of $M_{t}^{n}$ and is actually
the area of $M_{t}^{n}$. According to the first variation of a
submanifold, see e.g. \cite{ls},
 and the fact $-\mbox{div}_{M_{t}^{n}}\nu=H$, we have
\begin{equation}\label{imcf-crf-for-01}
\begin{split}
f'(t)&=\int_{M_{t}^{n}} \mbox{div}_{M_{t}^{n}} \left( \frac{\nu}{H}\right) d\mathcal{H}^n\\
&=\int_{M_{t}^{n}} \sum_{i=1}^n \left\langle \nabla_{e_i}\left(\frac{\nu}{H}\right), e_i\right\rangle_{L} d\mathcal{H}^n\\
&=-f(t),
\end{split}
\end{equation}
where $\{e_{i}\}_{1\leq i\leq n}$ is some orthonormal basis of the
tangent bundle $TM_{t}^{n}$. By (\ref{imcf-crf-for-01}), we have
$$f(t)=\mathcal{H}^n(M_{0}^{n})e^{-t}.$$
 Therefore, the rescaled
hypersurface $\widetilde{M}_t=M_{t}^{n} \Theta^{-1}$ satisfies the
following inequality
 \begin{eqnarray*}
\frac{\mathcal{H}^n(M_{0}^{n})}{ e^{n \varphi_2}} \leq
\mathcal{H}^n(\widetilde{M}_t)\leq
\frac{\mathcal{H}^n(M_{0}^{n})}{e^{n \varphi_1}},
 \end{eqnarray*}
 which implies that the area
of  $\widetilde{M}_t$ is bounded and the bounds are independent of
$t$. Here $\varphi_1=\inf_{M^n} \varphi(\cdot,0)$ and
$\varphi_2=\sup_{M^n} \varphi(\cdot,0)$. Together with
(\ref{imfcone-holder-01}), Lemma \ref{lemma5-1} and the
Arzel\`{a}-Ascoli theorem, we conclude that $\widetilde{u}(\cdot,t)$
must converge in $C^{\infty}(M^n)$ to a constant function
$r_{\infty}$ with
\begin{eqnarray*}
\frac{1}{e^{\varphi_{2}}}\left(\frac{\mathcal{H}^n(M_{0}^{n})}{\mathcal{H}^n(M^{n})}\right)^{\frac{1}{n}}\leq
r_{\infty}
\leq\frac{1}{e^{\varphi_{1}}}\left(\frac{\mathcal{H}^n(M_{0}^{n})}{\mathcal{H}^n(M^{n})}\right)^{\frac{1}{n}},
\end{eqnarray*}
i.e.,
\begin{eqnarray}\label{radius}
\frac{1}{\sup\limits_{M^{n}}u_{0}}\left(\frac{\mathcal{H}^n(M_{0}^{n})}{\mathcal{H}^n(M^{n})}\right)^{\frac{1}{n}}\leq
r_{\infty}
\leq\frac{1}{\inf\limits_{M^{n}}u_{0}}\left(\frac{\mathcal{H}^n(M_{0}^{n})}{\mathcal{H}^n(M^{n})}\right)^{\frac{1}{n}}.
\end{eqnarray}
This completes the proof.
\end{proof}

So, we have
\begin{theorem}\label{rescaled flow}
The rescaled flow
\begin{equation*}
\frac{d
\widetilde{X}}{dt}=\frac{1}{\widetilde{H}}\nu+\frac{1}{n}\widetilde{X}
\end{equation*}
exists for all time and the leaves converge in $C^{\infty}$ to a
piece of hyperbolic plane of center at origin and radius
$r_{\infty}$, i.e., a piece of $\mathscr{H}^{n}(r_{\infty})$, where
$r_{\infty}$ satisfies (\ref{radius}).
\end{theorem}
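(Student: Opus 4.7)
The plan is to assemble pieces already established rather than to develop new estimates. First I would observe that long-time existence of the rescaled flow is immediate: since $\widetilde{X} = X\,\Theta^{-1}(t,c)$ with $\Theta(t,c)=e^{-t/n+c}$ smooth and positive on $[0,\infty)$, Theorem \ref{key-2} gives $X$ on $M^n\times[0,\infty)$ and hence $\widetilde{X}$ as well. Differentiating $\widetilde{X}=X\,\Theta^{-1}$ in $t$, using the unrescaled equation $\partial_t X = \nu/H$ together with the scaling identities in Lemma \ref{res-01} (in particular $\widetilde{H}=H\Theta$, and $\nu$ unchanged under the dilation $\Theta^{-1}$), produces exactly
\begin{equation*}
\frac{d\widetilde{X}}{dt} \;=\; \frac{1}{\widetilde{H}}\,\nu + \frac{1}{n}\widetilde{X},
\end{equation*}
which matches the claimed rescaled flow equation.

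Second, I would reduce the claimed smooth convergence of the leaves to the already-established convergence of the height function. Since $M_t^n = \mathrm{graph}_{M^n}u(\cdot,t)$, the rescaled hypersurface $\widetilde{M}_t$ is the graph of $\widetilde{u}(\cdot,t)=u(\cdot,t)\Theta^{-1}$ over $M^n\subset\mathscr{H}^n(1)$. Lemma \ref{rescaled flow} (together with the higher-order estimate \eqref{imfcone-holder-02}, the decay \eqref{Gra-est1}, and Arzel\`a--Ascoli) already supplies $\widetilde{u}(\cdot,t)\to r_\infty$ in $C^\infty(M^n)$ for a constant $r_\infty$ satisfying \eqref{radius}. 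Because the graph map $M^n\ni x\mapsto (x,\widetilde{u}(x,t))$ is smooth in $\widetilde{u}$, smooth convergence of the heights passes to smooth convergence of the embeddings $\widetilde{X}(\cdot,t)$.

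Third, I would identify the limit geometrically. By Lemma \ref{lemma2-1}(i) and the ansatz $\widetilde{X}(x,t)=(x,\widetilde{u}(x,t))$ in the radial description $\{rx:r>0,\,x\in\mathscr{H}^n(1)\}$ of the interior of the time cone, the limit height $\widetilde{u}\equiv r_\infty$ corresponds to the subset $\{r_\infty x: x\in M^n\}\subset\mathscr{H}^n(r_\infty)$. Hence the leaves converge in $C^\infty$ to this piece of $\mathscr{H}^n(r_\infty)$. The two-sided bound \eqref{radius} on $r_\infty$ is pinned down by the evolution of area: from \eqref{imcf-crf-for-01} one has $\mathcal{H}^n(M_t^n)=\mathcal{H}^n(M_0^n)e^{-t}$, which rescales to $\mathcal{H}^n(\widetilde{M}_t)\to r_\infty^n\,\mathcal{H}^n(M^n)$, and comparing against the $C^0$ sandwich of $\widetilde{u}(\cdot,t)$ between $(\inf u_0)\Theta^{-1}e^c$ and $(\sup u_0)\Theta^{-1}e^c$ (via Lemma \ref{lemma3.1}) yields the stated inequalities.

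There is no real obstacle, since the present theorem is a geometric repackaging of the analytic convergence proved in Lemma \ref{rescaled flow}; the only step requiring attention is the identification of the graphical limit with a dilate of $\mathscr{H}^n(1)$, which follows directly from the radial structure of the rescaling $\widetilde{X}=X\,\Theta^{-1}$. Verifying that the Neumann condition $D_\mu\widetilde{u}=0$ is preserved under the limit (which is automatic since the condition passes to $C^1$ limits) completes the argument.
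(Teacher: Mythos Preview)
Your proposal is correct and follows essentially the same approach as the paper: the theorem is given no separate proof there, since it is an immediate geometric repackaging of Lemma \ref{rescaled flow} (the $C^\infty$ convergence of $\widetilde{u}$ to the constant $r_\infty$) together with Theorem \ref{key-2} and the scaling identities of Lemma \ref{res-01}. Your added remarks on verifying the rescaled evolution equation, identifying the graphical limit with $\{r_\infty x:x\in M^n\}\subset\mathscr{H}^n(r_\infty)$, and recovering the bounds \eqref{radius} via the area evolution are all straightforward and consistent with the paper's treatment.
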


\section{Examples} \label{se7}

\subsection{Example in the Lorentz-Minkowski case}

In Theorem \ref{main1.1} here, if $u_{0}=R_{0}$ for some positive
constant $R_{0}>0$, i.e., the initial hypersurface $M^{n}_{0}$ is a
convex piece of the hyperbolic plane $\mathscr{H}^{n}(R_{0})$, then,
in this particular case, the tangential vector on $M^{n}_{t}$
becomes
$$X_{i} = \partial_{i},$$
and the corresponding past-directed timelike unit normal vector is
given by
$$\nu = -\partial_{r}.$$
By direct calculation, the mean curvature of
$\mathscr{H}^{n}(R_{0})$ is given by
$$H=\frac{n}{R_{0}}.$$
One can choose
$$\mu = \frac{1}{\sqrt{n}}\partial_{i}$$
to be the spacelike unit vector, which clearly satisfies the
property required in Theorem \ref{main1.1}. So, the system
(\ref{Eq}) degenerates into the following initial-boundary value
problem (IBVP for short)
\begin{equation*}
\left\{
\begin{aligned}
&\frac{du(t)}{dt} = -\frac{u(t)}{n} \quad &&\mathrm{in}~
M^n\times(0,\infty)\\
&D_\mu u(t) = 0 \quad &&\mathrm{on}~\partial M^n\times(0,\infty)\\
&u(0) = R_{0} \quad&&\mathrm{in}~M^{n},
\end{aligned}\right.
\end{equation*}
which obviously has the solution
\begin{eqnarray*}
u(t) = R_{0}e^{-\frac{1}{n}t}.
\end{eqnarray*}
Therefore, in this setting, the evolving hypersurfaces $M^{n}_{t}$
should be $M^{n}_{t}=\{(x,R_{0}e^{-\frac{1}{n}t})|x\in
M^{n}\subset\mathscr{H}^{n}(1),0\leq t<\infty\}$, and clearly, as
$t\rightarrow\infty$, $u(t)\rightarrow 0$, which implies that
$M^{n}_{t}$ tends to the vertex of the lightcone of
$\mathbb{R}^{n+1}_{1}$ as $t\rightarrow\infty$ (i.e., $M^{n}_{t}$
tends to the origin of $\mathbb{R}^{n+1}$ as $t\rightarrow\infty$).
That is to say, in this setting, although one has the long-time
existence for the IMCF considered in Theorem \ref{main1.1}, the flow
 forms a singularity as $t\rightarrow\infty$.

 \subsection{Example in the Euclidean case}

In \cite[Theorem 1]{Mar}, if  $u_{0}=R_{0}$ for some positive
constant $R_{0}>0$, i.e., the initial hypersurface $M^{n}_{0}$ is a
convex piece of the Euclidean $n$-sphere $\mathbb{S}^{n}(R_{0})$
centered at the origin and with radius $R_{0}$, then, in this
particular case, the system (IMCF) in \cite[Theorem 1]{Mar}
degenerates into the following IBVP
\begin{equation} \label{se7-1}
\left\{
\begin{aligned}
&\frac{du(t)}{dt} = \frac{u(t)}{n} \quad &&\mathrm{in}~
M^n\times(0,\infty)\\
&D_\mu u(t) = 0 \quad &&\mathrm{on}~\partial M^n\times(0,\infty)\\
&u(0) = R_{0} \quad&&\mathrm{in}~M^{n},
\end{aligned}\right.
\end{equation}
where $M^{n}\subset \mathbb{S}^{n}$ is some convex piece of
$\mathbb{S}^{n}$, and $\mu$ is the outward unit normal vector field
of the open smooth convex cone $\Sigma^n:=\{rx\in \mathbb{R}^{n+1}|
r>0, x\in
\partial M^n\}$. Clearly, the IBVP (\ref{se7-1}) has the solution
\begin{eqnarray*}
u(t) = R_{0}e^{\frac{1}{n}t}.
\end{eqnarray*}
Therefore, in this setting, the evolving hypersurfaces $M^{n}_{t}$
should be $M^{n}_{t}=\{(x,R_{0}e^{\frac{1}{n}t})|x\in
M^{n}\subset\mathbb{S}^{n},0\leq t<\infty\}$, and clearly, as
$t\rightarrow\infty$, $u(t)\rightarrow\infty$, which implies that
the IMCF considered in \cite[Theorem 1]{Mar} is an expanding flow.

\vspace {5 mm}

\section*{Acknowledgments}
This work is partially supported by the NSF of China (Grant Nos.
11801496 and 11926352), the Fok Ying-Tung Education Foundation
(China) and  Hubei Key Laboratory of Applied Mathematics (Hubei
University).

\vspace {5mm}

\end{document}